\def\RSthmtxt{theorem~}\newref{thm}{name = \RSthmtxt}}
\def\RSlemtxt{lemma~}\newref{lem}{name = \RSlemtxt}}
\numberwithin{equation}{section}
\numberwithin{figure}{section}
\theoremstyle{plain}
\newtheorem{thm}{\protect\theoremname}[section]
\theoremstyle{remark}
\newtheorem{rem}[thm]{\protect\remarkname}
\theoremstyle{definition}
\newtheorem{defn}[thm]{\protect\definitionname}
\theoremstyle{remark}
\newtheorem{note}[thm]{\protect\notename}
\theoremstyle{plain}
\newtheorem{lem}[thm]{\protect\lemmaname}
\theoremstyle{plain}
\newtheorem{cor}[thm]{\protect\corollaryname}
\theoremstyle{definition}
\newtheorem{example}[thm]{\protect\examplename}
\theoremstyle{plain}
\newtheorem{prop}[thm]{\protect\propositionname}
\theoremstyle{remark}
\newtheorem*{acknowledgement*}{\protect\acknowledgementname}
\DeclareMathOperator*{\argmax}{\mathit{argmax}}
\setlist[enumerate]{itemsep=5pt,topsep=3pt}
\setlist[enumerate,1]{label=\textup{(}\roman*\textup{)},ref=\roman*}
\setlist[enumerate,2]{label=(\alph*),ref=\theenumi \alph*}
\tikzset{%
    myarrow/.style = {-Stealth, shorten >=5pt}
}
\theoremstyle{definition}
\providecommand{\acknowledgementname}{Acknowledgement}
\providecommand{\corollaryname}{Corollary}
\providecommand{\definitionname}{Definition}
\providecommand{\examplename}{Example}
\providecommand{\lemmaname}{Lemma}
\providecommand{\propositionname}{Proposition}
\providecommand{\remarkname}{Remark}
\providecommand{\theoremname}{Theorem}
\let\oldnumberline\numberline%
\renewcommand{\numberline}{\figurename~\oldnumberline}
\providecommand{\acknowledgementname}{Acknowledgement}
\providecommand{\corollaryname}{Corollary}
\providecommand{\definitionname}{Definition}
\providecommand{\examplename}{Example}
\providecommand{\lemmaname}{Lemma}
\providecommand{\notename}{Note}
\providecommand{\propositionname}{Proposition}
\providecommand{\remarkname}{Remark}
\providecommand{\theoremname}{Theorem}
\begin{document}
\title[Dimension Reduction and Kernel Principal Component Analysis]{An Infinite dimensional Analysis of Kernel Principal Components }
\author{Palle E.T. Jorgensen}
\address{(Palle E.T. Jorgensen) Department of Mathematics, The University of
Iowa, Iowa City, IA 52242-1419, U.S.A.}
\email{palle-jorgensen@uiowa.edu}
\author{Sooran Kang}
\address{(Sooran Kang) College of General Education, Choongang University,
Seoul, Korea}
\email{sooran09@cau.ac.kr}
\author{Myung-Sin Song}
\address{(Myung-Sin Song) Department of Mathematics and Statistics, Southern
Illinois University Edwardsville, Edwardsville, IL 62026, USA}
\email{msong@siue.edu}
\author{James Tian}
\address{(James F. Tian) Mathematical Reviews, 416 4th Street Ann Arbor, MI
48103-4816, U.S.A.}
\email{jft@ams.org}
\begin{abstract}
We study non-linear data-dimension reduction. We are motivated by
the classical linear framework of Principal Component Analysis. In
nonlinear case, we introduce instead a new kernel-Principal Component
Analysis, manifold and feature space transforms. Our results extend
earlier work for probabilistic Karhunen-Loève transforms on compression
of wavelet images. Our object is algorithms for optimization, selection
of efficient bases, or components, which serve to minimize entropy
and error; and hence to improve digital representation of images,
and hence of optimal storage, and transmission. We prove several new
theorems for data-dimension reduction. 
\end{abstract}

\keywords{dimension reduction, principal component analysis, eigenvalues, optimization,
Karhunen-Loève transform, optimal storage, transmission, wavelet image
compression, frames, entropy encoding, algorithms, Hilbert space.}
\subjclass[2000]{62H25, 34L16, 65K10, 65T60, 42C15, 47B06, 47B32, 65D15, 47A70, 37M25,
42C40.}

\maketitle
\tableofcontents{}

\renewcommand{\listfigurename}{Data and digital image illustrations}

\listoffigures

\section{\label{sec:intro}Introduction}

Recently a number of new features of principal component analysis
(PCA) have lead to exciting and new improved dimension reduction (DR).
See e.g., \cite{Belkin03,MR3857315,MR3883202,4106847,MR2247587,MR3005666,MR2177937,MR3854652,MR3534893}.
In general DR refers to the process of reducing the number of random
variables under consideration in such areas as machine learning, statistics,
and information theory. Within machine learning, it involves both
the steps of feature selection and feature extraction. In the present
paper, we shall consider linear as well as non-linear data models.
The linear case arises naturally in principal component analysis (PCA).
See \cite{So07,JoSo07a}. Here one starts with a linear mapping of
the given data into a suitable lower-dimensional space. However, this
must be done in such a way that the variance of the data in the low-dimensional
representation is maximized. As for the variance, we study both covariance,
and the correlation matrix for the underlying data. The eigenvectors
that correspond to the largest eigenvalues (the principal components)
are then used in a construction of a large fraction of the initial
variance, i.e., that which corresponds to the original data. The first
few eigenvectors are typically interpreted in terms of the large-scale
physical behavior of a particular system; and will retain the most
important variance features. 

In nonlinear settings, principal component analysis can still be adapted,
but now by means of suitable kernel tricks. In some applications,
instead of starting from a fixed kernel, the optimization will instead
try to learn, or adapt, for example with the use of semidefinite programs
\cite{MR2741485,MR1976484}. The most prominent such a technique is
known as maximum variance unfolding (MVU) \cite{MR3104494,MR2278445}.

The aim of this paper is to set the stage for further developments
of the linking of the mathematics of kernel-PCA (KPCA) with real-life
non-linear data-sets, coming from a host of new developments in the
current applied literature. A key focus for our work is a study of
two complementary directions: on one side, the mathematics of kernel
theory, and on the other, empirical (here, numerical experiments starting
with databases from real-world situations). For the purpose, we develop
(Section \ref{sec:kpca} ) particular kernel tools with view to KPCA
via statistical simulation, as well as corresponding analyses of kernel
based choices of feature spaces and feature maps.

In recent years, the subject of kernel-principal component analysis
(KPCA), and its applications, has been extensively studied, and made
progress in diverse directions. There is a variety of themes covered
in the literature dealing with applications of kernel-PCA tools is
vast, each of some relevance to our present theme. A short user guide:
the paper \cite{MR3857315} deals with sensitivity issues, \cite{MR3883202}
direct integration algorithms, \cite{4106847} image processing, \cite{MR2247587,MR3005666,MR3854652}
pattern recognition and machine learning, \cite{MR2177937} statistical
properties, and finally \cite{MR3534893} on selection of efficient
parameterizations. In addition to earlier papers by the co-authors
\cite{So07,JoSo07a}, we also include here a partial list of other
relevant and current citations; see e.g., the papers mentioned above,
\cite{MR3780557,MR3820672,MR3911884,MR1720704,MR3857315,MR3878657,MR3913046,MR3934645,MR3850675,MR3900802,MR3922239}.

\subsection{Reproducing kernels and operator theory}

In our present paper we stress, and motivate, the most general framework
for choices of positive definite kernels, their corresponding RKHSs,
and families of compatible measure spaces. These considerations encompass
more general contexts for Principal Component Analysis, going beyond
classical cases of kernel-based PCA in the literature. In particular,
this very general and non-linear framework for PCA data-dimension
reduction goes beyond existing, more classical, considerations admitting
easy realizations in $\mathbb{R}^{n}$, classical kernels, and classical
Fourier tools. Indeed, there are existing parallel considerations,
in the literature involving only kernel analyses for such special
cases.

However, existing non-linear realizations of big and un-structured
data motivate choices from a much wider family of RKHSs, and measure
spaces. This further entails infinite-dimensional stochastic analysis
frameworks which do not admit realizations in $\mathbb{R}^{n}$, nor
classical PDE/Fourier techniques. Examples of uses, in the literature,
of general RKHSs in stochastic approaches include online learning
with Markov sampling, data mining (e.g., in finance, intelligence,
hidden data, tele-communication, energy, unstructured information
integrated with traditional structured data), as well as a diverse
host of different investigations, see e.g., \cite{MR3104494,MR4188895,MR3231624,MR2247587,MR1976484,MR2239907,MR1864085,MR3912285,MR3507188,MR3736758,Jorgensen2018,MR2482257,MR2558684,MR2488871,MR2177937,MR3642406,aggarwal2018machine,hogan2020web}.
Moreover \textquotedblleft big data\textquotedblright{} further dictate
diverse, large, infinite-dimensional, and extremely non-linear data-sets
which do not lend themselves to feature analysis with choices of anyone
of the simple kernels with easy realizations in $\mathbb{R}^{n}$.
Examples of this includes many public available datasets on Global
Health Observatory, U.S. Census Bureau, NIH Data Sharing Repositories,
Kaggle, etc. Such cases of course dictate new and diverse choices
from a much wider family of kernels and RKHSs. Encoding of meaningful
programs for corresponding non-linear data-dimension reduction algorithms,
while feasible, is a gigantic task, going far beyond the scope of
our present paper. It is the subject of future projects involving
the present authors, as well as others, see e.g., the papers cited
above.

In addition, we call attention to the 2018 paper \cite{8248802} by
Yamada et al. It is motivated by existing and new biological data.
For the purpose, the authors introduce a novel Hilbert-Schmidt independence
criterion. They apply it to specific high dimensional, and large-scale
datasets, as arising in ultra-high dimensional biological data. The
authors obtain classification into phenotypes, module expression in
human prostate cancer patients, and detection of enzymes in protein
structures. The 2019 paper \cite{9002955} by Chen et al deals with
a new class of kernel based nonlinear clustering algorithms. They
study locality structures in machine learning. Also motivated by specific
large-scale datasets is the 2020 paper \cite{9248120}. This paper
introduces new hyper-parameter initialization tools for kernel-based
regression analysis. Yet different but related kernel based technology,
and specific data sets, are further covered in the papers \cite{1556085},
\cite{10.1145/1015330.1015345}, \cite{914517}, and \cite{doi:10.1148/ryai.2021210011}.

Motivated by related applications, we focus on non-traditional positive
definite kernels and the associated RKHSs, in particular, when the
family of \emph{feature spaces} may be chosen in the form $L^{2}\left(\mu\right)$.
This raises the question of which measures $\mu$ are right for a
particular kernel $K$ and its RKHS. 

In the use of probabilistic tools in big data, such as Monte Carlo
simulation (see e.g., (\ref{eq:br6}) below), a main issue is reduction
of the required sample size; \emph{and still maintain} (i) acceptable
small noise in output, (ii) small error term estimates, as well as
(iii) reduction of computation time. These issues are especially relevant
for modern image processing. As a consequence, more traditional kernel
tools must therefore be adapted. Both our present paper, and an earlier
one \cite{MR4295177}, serve to do this by shifting the focus in the
design of optimization algorithms to that of making choices of \emph{measurable
partitions}, as opposed to working directly with points.

For instance, when $K=K^{\left(\mu\right)}$ is defined on a $\sigma$-finite
measure space $\left(V,\mathscr{B},\mu\right)$, where $K^{\left(\mu\right)}\left(A,B\right)=\mu\left(A\cap B\right)$,
for all $A,B$ in $\mathscr{B}_{fin}=\left\{ A\in\mathscr{B}:\mu\left(A\right)<\infty\right\} $,
the associated RKHS $\mathscr{H}(K^{\left(\mu\right)})$ is shown
to be a Hilbert space of signed finite measures $F$ on $\left(V,\mathscr{B}\right)$,
such that $F\ll\mu$ and $\left\Vert F\right\Vert _{\mathscr{H}(K^{\left(\mu\right)})}=\left\Vert dF/d\mu\right\Vert _{L^{2}\left(\mu\right)}$;
see \cite[Thms 3.2,  3.4]{MR4295177}. Moreover, in this measurable
setting, the classical Karhunen-Loève decomposition takes the following
form instead: 
\begin{equation}
X_{A}=\sum_{n=0}^{\infty}\left(\int_{A}f_{n}d\mu\right)Z_{n},\quad A\in\mathscr{B}_{fin}\label{eq:br6}
\end{equation}
where $X_{A}$ is a centered Gaussian process indexed by $\mathscr{B}_{fin}$,
$\left\{ f_{n}\right\} _{n\in\mathbb{N}_{0}}$ is an orthonormal basis
(ONB) in $L^{2}\left(\mu\right)$, and $\left\{ Z_{n}\right\} _{n\in\mathbb{N}_{0}}$
is an i.i.d. $N\left(0,1\right)$ system. These new kernels encompass
realistic cases when random measurements (random variables) might
not be performed at points, but instead at sample sets that are selected
from suitable choices of sigma-algebras. For more details on the more
general context of Parseval frames in the measure category, see \subsecref{gf}
below, and \cite[Cor 3.13 and sect.  5]{MR4295177}. 

\textbf{Main results.} The presentation of our main results is organized
as follows: \secref{kpca} covers multiple interrelated tools, playing
a key role in our main results. The first of these tools is our use
of \emph{kernels} (also called \emph{reproducing kernels}) and their
associated Hilbert spaces, called \emph{reproducing kernel Hilbert
spaces} (RKHSs.) The use of these RKHSs yields powerful tools, and
they allow explicit realizations of a variety of embeddings of \emph{non-linear
structures} into linear ones (Hilbert spaces). This is useful in turn
for our solution to \emph{optimization questions}, see \propref{sel}
and \corref{pce}. Our results are tested on datasets, see Figures
\ref{fig:bc} and \ref{fig:bc-1}. 

\textbf{Organization.} The goal of this paper is to extend our previous
results on Karhunen-Loève transform to a nonlinear setting by means
of kernel-principal component analysis (KPCA). This paper is organized
as follows: 

In \secref{kpca}, we show our main results using KPCA on nonlinear
data. See \lemref{rk1}. The latter is for rank-1 projections, but
is then extended to PCA selection, and algorithms, in subsequent results.
Our focus is the non-linear case. Indeed, the focus in \secref{kpca}
is nonlinear data dimension reduction, and the corresponding kernels,
the core of our paper. In particular, our \thmref{rcpt} deals with
kernel PCA for nonlinear data dimension reduction (see Examples \ref{exa:sc}
and \ref{exa:dr}). In the remaining part of \secref{kpca}, we turn
to the case of Gaussian kernels.

KPCA have found wide applications. The focus of our current paper
is to present a framework of operator in suitable Hilbert space, and
an associated spectral theory. Even though the applications we include
here are presented in finite dimensional setting, most of our results
extend to infinite dimensional spaces as well. Nonetheless, for use
in recursion schemes, the finite-dimensional case is most relevant.

Our main results deal with algorithms for optimization in maximal
variance, and dimension reduction-problems, from PCA. The points where
our results go beyond the earlier literature includes the following
list of four closely interrelated items: (i) Our design and use of
kernel tricks (specific reproducing kernel Hilbert spaces (RKHSs)
integrated into new designs of PCA-tools for image-analysis, thus
serving as practical tools in dimension-reduction algorithms; (ii)
Our combination of RKHS-tools with spectral theory and Hilbert space
frame-estimates; --- this serves in turn to make more precise both
specific numerical PCA-algorithms, and their error estimates; (iii)
Our use of RKHSs in creating explicit and practical embeddings of
non-linear data into suitable linear spaces (feature spaces); ---
applications to optimization (\subsecref{ao}); and (iv) A new dynamical
PCA-analysis (\subsecref{dyn})

Our results in \secref{kpca} include \lemref{rk1}, \thmref{rcpt},
and \corref{pce}; each of which are formulated and proved in a general
setting of kernel analysis; hence in a non-linear framework of feature
selection. 

Our main purpose is the identification and proof of rigorous results
from infinite-dimensional analysis which are needed in the solution
of optimization problems in the general framework of Principal Component
Analysis with an emphasis on the role of PCA-algorithms for selection
of features in infinite dimensions. Main points in the paper include
our infinite-dimensional optimization results, Theorems \ref{thm:opF},
\ref{thm:opF2}, \ref{thm:rcpt}, Remark \ref{rem:In-the-above},
and Corollaries \ref{cor:pce}, and \ref{cor:The-system-of}.

\section{\label{sec:kpca}Kernel PCA}

\textbf{Comparison of KPCA to standard PCA} (brief sketch.) The standard
PCA always finds linear principal components. It serves to represent
a given large data set into a suitable choice of principal components
for lower dimension. But PCA will fail to find good representative
directions when applied to most \emph{non-linear data} of interest.
For non-linear and more tricky data sets, one turns instead to Kernel
PCA (KPCA). It has the following advantages and features (not a complete
list): (i) KPCA will perform PCA but in a new space. (ii) It uses
diverse kernel tricks in order to still find principal components,
but they will be in a different space (typically in a higher dimensional
space.) (iii) KPCA will use Hilbert space geometry in order to still
find new directions based on families of kernel-matrices. Via geometric
algorithms, KPCA will then extract corresponding eigenvalues (corresponding
to a suitable number of observations.) However, (iv) the computational
complexity for KPCA, dictated by extraction of principal components,
will take more time, as compared to standard PCA. The list of papers
on KPCA and applications to diverse sets of non-linear data is long;
some recent ones are \cite{Harkat:2020tx,rs11101219,MR2249872,MR2819026,10.1145/1015330.1015417,naik2017advances,kpcai}.

In this section, we make precise, and show that the choice of Gaussian
kernel is \textquotedblleft optimal\textquotedblright{} for KPCA in
capturing maximal variance (i.e., maximal variability of data). There
is an extensive literature on PCA and KPCA. Our approach and emphasis
are different. For instance, the paper \cite{rosipal2001kernel} deals
with certain Least Squares Regression problems with the use of a particular
(small) class of Reproducing tools involving Kernel Hilbert Space.
While there are connections to our present themes, our present results
are much more general, offering in particular specific spectral theoretic,
and stochastic tools, penalty terms, especially relevant for digital
images, and for dynamical PCA. The focus of \cite{10.1162/089976698300017467}
is the design of certain Nonlinear Component Analysis as a particular
class of Kernel Eigenvalue Problems. By comparison, our present results
and applications have a different focus, use different tools both
as regards to infinite-dimensional analysis, optimization, and Gaussian
fields; as we outline below. This paper \cite{MR290013} deals with
a particular class of splines (numerical analysis). This in turn entails
a class of kernels. But the focus of \cite{MR290013} is quite different
from ours. The paper \cite{WU1997165} deals with matrix algorithms
close in spirit to our present paper, but our focus and applications
are global in nature, and are very different as we outline below.
The book \cite{cristianini_shawe-taylor_2000} deals with kernel methods
and some of their applications to support vector machines. This is
an exciting application of feature selections, and learning algorithms,
but the aim and the tools developed in this book are quite different
from our present focus.

This allows us to obtain extensions of some PCA-results from \cite{JoSo07a}.
We shall focus here on their use in kernel PCA (KPCA). 

PCA is used in data dimension reduction on linear case. However, this
cannot be done on nonlinear case and thus kernel principal component
analysis (KPCA) is used for nonlinear dimension reduction. See, e.g.,
\cite{MR2558684,MR2488871,MR2327597,MR2228737,MR1968413,MR1864085}.

Standard PCA is effective at identifying linear subspaces carrying
the greatest variance in a data set. However, this method is not able
to detect nonlinear submanifolds. A popular technique to tackle the
latter case is kernel PCA. It first maps data into a higher dimensional
space $\mathcal{H}\left(K\right)$, and performs PCA there. Here $\mathcal{H}\left(K\right)$
is the reproducing kernel Hilbert space (RKHS) associated with a given
positive definite kernel $K$. (Recall that an RKHS is a Hilbert space
$\mathcal{H}$ of functions defined on a set $X\left(\neq\emptyset\right)$,
such that the evaluation functionals $\mathcal{H}\ni f\rightarrow f\left(x\right)\in\mathbb{C}$
are bounded for all $x\in X$. See also \defref{RKHS} below.) The
mapping in this context presumably sends a nonlinear submanifold in
the input space to a linear subspace in $\mathcal{H}\left(K\right)$.
For example, in classification problems, a kernel is usually chosen
so that the mapped data can be separated by a linear decision boundary
in $\mathcal{H}\left(K\right)$ (see \figref{bc}). 
\begin{rem}
It would be intriguing to compare Smale's Dimension Reduction algorithm
from \cite{MR2558684} with ours. The two approaches are along a different
lines of development.

The approach in Belkin's paper \cite{Belkin03} is popular in current
Machine Learning research. Both our results and those of Belkin et
al aim for dimension reduction algorithms. Other methods exist, which
constitute variants of KPCA, but with different choices of kernels,
and with the Laplacian eigenmap (LE) as one of them. (See also \cite{MR3912285,MR3935831,MR3935948,MR3934524}.)
For recent developments on graph Laplacians, and Perron-Frobenius
eigenfunctions as principal components, we refer to e.g., \cite{MR1913212}. 

As applications of the theory presented in the last section, \ref{sec:kpca},
we include a discussion of new non-linear, and real-life data/simulated
examples. Moreover, inside the sections, we offer explanations for
our use of RKHS-theory, as applied in new KPCA based Gaussian process
simulations. In particular, we outline how they serve to produce optimal
choices for detection of maximal variance.
\end{rem}

The \emph{optimization} issues in the present section refer to details
in subsections \ref{subsec:of} and \ref{subsec:dual}; see especially
\lemref{rk1}, \remref{ork} and \thmref{rcpt}, all adapted to
the PCA issues at hand. Our notation for solutions to optimization
problems is ``\emph{argmax}'' (standard terminology in the optimization
literature.) The role of \emph{kernels} enters via associated choices
of\emph{ feature maps} $\Phi$, see (\ref{eq:du1}), (\ref{eq:dp2}),
(\ref{eq:d1}), (\ref{eq:A3}), (\ref{eq:A4}), and subsection \ref{subsec:FS}.
Furthermore, our choices of \emph{frames}, global frame-operators,
and their adjoints (\propref{sel}), then allow a\emph{ duality approach}
which in turn helps us gain insight into algorithms for the optimizers. 

By a \textbf{feature} we mean an individual measurable property or
characteristic. One chooses informative, discriminating and \emph{independent}
features to be used in algorithmic constructions in machine learning,
in pattern recognition, and regression. Vector spaces associated with
particular features are called the \textbf{feature spaces}. In KPCA,
the choices of \emph{feature spaces} come about from specified \emph{kernels}
(assumed positive definite, p.d.), see \defref{RKHS} below. If $K$
is a fixed p.d. kernel defined on general non-linear sets of data,
a natural choice of feature space is then in the form of a \emph{Hilbert
space} is called \emph{the reproducing kernel Hilbert space} (RKHS),
written $\mathscr{H}\left(K\right)$, see \remref{rk} below.
\begin{defn}
\label{def:RKHS}Let $S$ be a set. A positive definite (p.d.) kernel
on $S$ is a function $K:S\times S\rightarrow\mathbb{C}$, such that
\begin{equation}
\sum_{i,j=1}^{N}\overline{c_{i}}c_{j}K\left(x_{i},x_{j}\right)\geq0\label{eq:pd1}
\end{equation}
for all $\left\{ x_{i}\right\} _{i=1}^{N}\subset S$, $\left\{ c_{i}\right\} _{i=1}^{N}\subset\mathbb{C}$,
and $N\in\mathbb{N}$.

Given a p.d. kernel as in (\ref{eq:pd1}), there exists a reproducing
kernel Hilbert space (RKHS) $\mathcal{H}\left(K\right)$ and a mapping
$\Phi:S\rightarrow\mathcal{H}\left(K\right)$ such that 
\begin{equation}
K\left(x,y\right)=\left\langle \Phi\left(x\right),\Phi\left(y\right)\right\rangle _{\mathcal{H}\left(K\right)}.\label{eq:A2}
\end{equation}
The function $\Phi$ in (\ref{eq:A2}) is called a \emph{feature map}
for the problem.

Moreover, the following reproducing property holds: 
\begin{equation}
f\left(x\right)=\left\langle K_{x},f\right\rangle _{\mathcal{H}\left(K\right)},
\end{equation}
for all $f\in\mathcal{H}\left(K\right)$, and $x\in S$. 
\end{defn}

While the theory of \emph{positive definite kernels} and their RKHSs
have served for a long time as powerful tools in diverse areas of
pure and applied mathematics, the particular applications of them
we need here are of a more recent vintage; for background literature,
see e.g., \cite{MR3402823,MR2239907,MR3450534,MR3736758,Jorgensen2018}.
The theory of \emph{frames} (non-orthogonal expansions) also plays
an essential role in our present approach to PCA, and the list of
relevant papers for background includes \cite{MR2367342,MR2193805,MR3441732,MR3085820}.

\textbf{Feature maps.} In the subsequent discussion, reference to
\emph{kernel} will always mean positive definite kernel, see (\ref{eq:pd1}).
For each choice of kernel, defined on data sets (typically non-linear
configurations), there will then be associated \emph{feature maps}.
Generally, a feature map $\Phi$ is simply a \emph{function} which
maps sets of data configurations (non-linear) into some choice of
\emph{feature space}, a Hilbert space, say $\mathscr{L}$. Typically
the \emph{inner product} in $\mathscr{L}$ will serve to model correlation,
or regression numbers, or other measurements corresponding to other
features of interest. Given a kernel $K$, a natural choice of feature
space $\mathscr{L}$ is the RKHS defined directly from $K$, see (\ref{eq:A2}),
but there are other possibilities. In a different choice for $\Phi$,
the range of $\Phi$, and the inner product on the right-hand side
in (\ref{eq:A2}) might be different. The main logic in the use of
kernels in \emph{machine learning} is that it yields representations
of learning algorithm for data sets that will then become more amenable
to regression analysis, detection, learning, and to classification.
Analysis is aided with the use of powerful tools from the theory of
linear operators in Hilbert space; see e.g., \cite{MR2367342,Jorgensen2018,MR2327597}.
We stress that the phrase \emph{feature map} is broad, and that a
wide variety of functions $\Phi$ may serve as feature maps. But the
main use of them relates to suitable choices of kernels. \emph{Support
Vector Machines} (and other kernel-based methods) make use both implicit
and explicit feature maps. This leads to remapping of data and will
allow non-linearly separable data sets to get linear representations.
The feature space leads to effective separation of data via suitable
choices of hyperplanes in higher dimension. But reaching these dimensions
might be computationally expensive because bad choices of feature
mappings might require many computations. 
\begin{rem}
\label{rem:rk}$\mathcal{H}\left(K\right)$ may be chosen as the Hilbert
completion of 
\begin{equation}
\text{span}\left\{ K_{x}:=K\left(\cdot,x\right)\right\} \label{eq:a4}
\end{equation}
with respect to the $\mathcal{H}\left(K\right)$-inner product 
\begin{equation}
\left\langle \sum c_{i}K_{x_{i}},\sum d_{j}K_{x_{j}}\right\rangle _{\mathcal{H}\left(K\right)}:=\sum\overline{c_{i}}d_{j}K\left(x_{i},x_{j}\right).\label{eq:a5}
\end{equation}

Initially the LHS in formula (\ref{eq:a5}) only refers to finite
linear combinations. Hence, the vector space (\ref{eq:a4}) becomes
a pre-Hilbert space. (By pre-Hilbert space we mean an inner product
space that is not complete.) The RKHS $\mathcal{H}(K)$ itself then
results from the standard Hilbert completion. It is this Hilbert space
we will use in our subsequent study of optimization problems, and
in our KPCA-dimension reduction. Sections \ref{subsec:ao}--\ref{subsec:of}
deal with separate issues of kernel-optimization. Before turning to
these, however, we will first introduce a setting of Hilbert-Schmidt
operators. This will play a crucial role in the formulation of our
main result, \thmref{rcpt} in \subsecref{of}. 
\end{rem}

Recall that a data set $\left\{ x_{j}\right\} _{j=1}^{n}$, $x_{j}\in\mathbb{C}^{m}$,
may be viewed as an $m\times n$ matrix $X$, where $x_{j}$ is the
$j^{th}$ column vector. Here, $m$ is the number of features, and
$n$ the number of sample points. Set 
\begin{equation}
\left\Vert X\right\Vert _{HS}^{2}=\sum\left|x_{ij}\right|^{2}=tr\left(X^{*}X\right),\label{eq:hsn}
\end{equation}
where $\left\Vert \cdot\right\Vert _{HS}$ in (\ref{eq:hsn}) denotes
the Hilbert-Schmidt norm. 
\begin{rem}
Let $\mathcal{H}$ be a Hilbert space, and let $HS\left(\mathcal{H}\right)$
be the Hilbert-Schmidt operators $\mathcal{H}\xrightarrow{\;X\;}\mathcal{H}$
with inner product 
\begin{equation}
\left\langle X,Y\right\rangle =tr\left(X^{*}Y\right).
\end{equation}
Then the two Hilbert spaces $HS\left(\mathcal{H}\right)$, and $\mathcal{H}\otimes\overline{\mathcal{H}}$
(tensor-product), are naturally isometrically isomorphic via 
\begin{equation}
HS\left(\mathcal{H}\right)\ni\left|u\left\rangle \right\langle v\right|\longrightarrow u\otimes\overline{v}\in\mathcal{H}\otimes\overline{\mathcal{H}},\label{eq:hs2}
\end{equation}
Indeed, 
\[
\left\Vert \left|u\left\rangle \right\langle v\right|\right\Vert _{HS}^{2}=\left\Vert u\right\Vert _{\mathcal{H}}^{2}\left\Vert v\right\Vert _{\mathcal{H}}^{2},
\]
and the assertion follows from isometric extension of (\ref{eq:hs2}). 
\end{rem}

\subsection{\label{subsec:ao}Application to Optimization}

One of the more recent applications of kernels and the associated
reproducing kernel Hilbert spaces (RKHS) is to optimization, also
called kernel-optimization. See \cite{MR3803845,MR2933765}. In the
context of machine learning, it refers to training-data and feature
spaces. In the context of numerical analysis, a popular version of
the method is used to produce splines from sample points; and to create
best spline-fits. In statistics, there are analogous optimization
problems going by the names ``least-square fitting,'' and ``maximum-likelihood''
estimation. In the latter instance, the object to be determined is
a suitable probability distribution which makes ``most likely''
the occurrence of some data which arises from experiments, or from
testing.

What these methods have in common is a minimization (or a max problem)
involving a ``quadratic'' expression $Q$ with two terms. The first
in $Q$ measures a suitable $L^{2}\left(\mu\right)$-square applied
to a difference of a measurement and a ``best fit.'' The latter
will then to be chosen from a number of suitable reproducing kernel
Hilbert spaces (RKHS). The choice of kernel and RKHS will serve to
select desirable features. So we will minimize a quantity $Q$ which
is the sum of two terms as follows: (i) a $L^{2}$-square applied
to a difference, and (ii) a penalty term which is a RKHS norm-squared.
(See eq. (\ref{eq:ao2}).) In the application to determination of
splines, the penalty term may be a suitable Sobolev normed-square;
i.e., $L^{2}$ norm-squared applied to a chosen number of derivatives.
Hence non-differentiable choices will be ``penalized.''

In all of the cases, discussed above, there will be a good choice
of (i) and (ii), and we show that there is then an explicit formula
for the optimal solution; see eq (\ref{eq:ao5}) in \thmref{opF}
below. 

Let $X$ be a set, and let $K:X\times X\longrightarrow\mathbb{C}$
be a positive definite (p.d.) kernel. Let $\mathcal{H}\left(K\right)$
be the corresponding reproducing kernel Hilbert space (RKHS). Let
$\mathscr{B}$ be a sigma-algebra of subsets of $X$, and let $\mu$
be a positive measure on the corresponding measure space $\left(X,\mathscr{B}\right)$.
We assume that $\mu$ is sigma-finite. We shall further assume that
the associated operator $T$ given by 
\begin{equation}
\mathcal{H}\left(K\right)\ni f\xrightarrow{\;T\;}\left(f\left(x\right)\right)_{x\in X}\in L^{2}\left(\mu\right)\label{eq:ao1}
\end{equation}
is densely defined and closable.

Fix $\beta>0$, and $\psi\in L^{2}\left(\mu\right)$, and set 
\begin{equation}
Q_{\psi,\beta}\left(f\right)=\left\Vert \psi-Tf\right\Vert _{L^{2}\left(\mu\right)}^{2}+\beta\left\Vert f\right\Vert _{\mathcal{H}\left(K\right)}^{2}\label{eq:ao2}
\end{equation}
defined for $f\in\mathcal{H}\left(K\right)$ , or in the dense subspace
$dom\left(T\right)$ where $T$ is the operator in (\ref{eq:ao1}).
Let 
\begin{equation}
L^{2}\left(\mu\right)\xrightarrow{\;T^{*}\;}\mathcal{H}\left(K\right)\label{eq:ao3}
\end{equation}
be the corresponding adjoint operator, i.e., 
\begin{equation}
\left\langle F,T^{*}\psi\right\rangle _{\mathcal{H}\left(K\right)}=\left\langle Tf,\psi\right\rangle _{L^{2}\left(\mu\right)}=\int_{X}\overline{f\left(s\right)}\psi\left(s\right)d\mu\left(s\right).\label{eq:ao4}
\end{equation}

Our present RKHS/$L^{2}\left(\mu\right)$ framework is close to that
of \cite{MR3560890}. But we have included the results we need in
our present framework, and for our purpose. 
\begin{thm}
\label{thm:opF}Let $K$, $\mu$, $\psi$, $\beta$ be as specified
above; then the optimization problem 
\begin{equation}
\inf_{f\in\mathcal{H}\left(K\right)}Q_{\psi,\beta}\left(f\right)\label{eq:a0o04a}
\end{equation}
has a unique solution $F$ in $\mathcal{H}\left(K\right)$, it is
\begin{equation}
F=\left(\beta I+T^{*}T\right)^{-1}T^{*}\psi\label{eq:ao5}
\end{equation}
where the operator $T$ and $T^{*}$ are as specified in (\ref{eq:ao1})-(\ref{eq:ao4}). 
\end{thm}

\begin{proof}
We fix $F$, and assign $f_{\varepsilon}:=F+\varepsilon h$ where
$h$ varies in the dense domain $dom\left(T\right)$ from (\ref{eq:ao1}).
For the derivative $\frac{d}{d\varepsilon}\big|_{\varepsilon=0}$
we then have: 
\[
\frac{d}{d\varepsilon}\big|_{\varepsilon=0}Q_{\psi,\beta}\left(f_{\varepsilon}\right)=2\Re\left\langle h,\left(\beta I+T^{*}T\right)F-T^{*}\psi\right\rangle _{\mathcal{H}\left(K\right)}=0
\]
for all $h$ in a dense subspace in $\mathcal{H}\left(K\right)$.
The desired conclusion follows. Note that convexity of the function
$Q_{\psi,\beta}$ makes these conditions sufficient as well. 
\end{proof}
As already mentioned, the general conclusion in the present \thmref{opF}
connects to both many uses of \emph{kernel} techniques in classical
analysis, as well as to new and exciting applications. In the second
group, we mention the following approach to \emph{Neural Networks}
\cite{MR4296249}, leading in turn to encoding of \emph{deep regularization}
into training of inner layers which make up specific Neural Network
(NN) constructions, with the use of \emph{kernel flows}. For details,
we refer to \cite{MR4296249} and the cited literature for details.
But here we only stress that, as a step in particular NN-designs,
one makes use of an recursive iteration of substitution into a fixed
\emph{kernel} (the kernel resulting from the substitution iterations
is called a \emph{warped kernel}); and then, at each step, there will
be an RKHS-optimization of a term that arises as a special case of
(\ref{eq:a0o04a}), see \cite[eq (1.2)]{MR4296249}.
\begin{flushleft}
\textbf{Least-square Optimization} 
\par\end{flushleft}

To help readers put our present results on feature space and RKHS
penalty-term into context, we now specialize the optimization formula
from \thmref{opF} to the problem of minimize a ``quadratic'' quantity
$Q$. It is still the sum of two individual terms: (i) a $L^{2}$-square
applied to a difference, and (ii) a penalty term which is the RKHS
norm-squared. But the least-square term in (i) will simply be a sum
of a finite number of squares of differences; hence ``least-squares.''
As an application, we then get an easy formula (\thmref{opF2}) for
the optimal solution.

Let $K$ be a positive definite kernel on $X\times X$ where $X$
is an arbitrary set, and let $\mathcal{H}\left(K\right)$ be the corresponding
reproducing kernel Hilbert space (RKHS). Let $m\in\mathbb{N}$, and
consider sample points:

$\left\{ t_{j}\right\} _{j=1}^{m}$ as a finite subset in $X$, and

$\left\{ y_{i}\right\} _{i=1}^{m}$ as a finite subset in $\mathbb{R}$,
or equivalently, a point in $\mathbb{R}^{m}$.

Fix $\beta>0$, and consider $Q=Q_{\left(\beta,t,y\right)}$, defined
by 
\begin{equation}
Q\left(f\right)=\sum_{i=1}^{m}\underset{\text{least square}}{\left|f\left(t_{i}\right)-y_{i}\right|^{2}}+\beta\underset{\text{penalty form}}{\left\Vert f\right\Vert _{\mathcal{H}\left(K\right)}^{2}},\quad f\in\mathcal{H}\left(K\right).\label{eq:a23}
\end{equation}
We introduce the associated dual pair of operators as follows: 
\begin{equation}
\begin{split}T:\mathcal{H}\left(K\right) & \longrightarrow\mathbb{R}^{m}\simeq l_{m}^{2},\;\mbox{and}\\
T^{*}:l_{m}^{2} & \longrightarrow\mathcal{H}\left(K\right)
\end{split}
\label{eq:a24}
\end{equation}
where 
\begin{align}
Tf & =\left(f\left(t_{i}\right)\right)_{i=1}^{m},\quad f\in\mathcal{H}\left(K\right);\:\mbox{and}\label{eq:a25}\\
T^{*}y & =\sum_{i=1}^{m}y_{i}K\left(\cdot,t_{i}\right)\in\mathcal{H}\left(K\right),\label{eq:a26}
\end{align}
for all $\vec{y}=\left(y_{i}\right)\in\mathbb{R}^{m}$.

Note that the duality then takes the following form: 
\begin{equation}
\left\langle T^{*}y,f\right\rangle _{\mathcal{H}\left(K\right)}=\left\langle y,Tf\right\rangle _{l_{m}^{2}},\quad\forall f\in\mathcal{H}\left(K\right),\:\forall y\in l_{m}^{2};\label{eq:a27}
\end{equation}
consistent with (\ref{eq:ao4}).

Applying \thmref{opF} to the counting measure 
\[
\mu=\sum_{i=1}^{m}\delta_{t_{i}}=\delta_{\left\{ t_{i}\right\} }
\]
for the set of sample points $\left\{ t_{i}\right\} _{i=1}^{m}$,
we get the two formulas: 
\begin{align}
T^{*}Tf & =\sum_{i=1}^{m}f\left(t_{i}\right)K\left(\cdot,t_{i}\right)=\sum_{i=1}^{m}f\left(t_{i}\right)K_{t_{i}},\:\mbox{and }\label{eq:a28}\\
TT^{*}\vec{y} & =K_{m}\vec{y}\label{eq:a29}
\end{align}
where $K_{m}$ denotes the $m\times m$ matrix 
\begin{equation}
K_{m}=\left(K\left(t_{i},t_{j}\right)\right)_{i,j=1}^{m}=\begin{pmatrix}K\left(t_{1},t_{1}\right) & \cdots & \cdots & K\left(t_{1},t_{m}\right)\\
K\left(t_{2},t_{1}\right) & \cdots & \cdots & K\left(t_{2},t_{m}\right)\\
\vdots\\
K\left(t_{m},t_{1}\right) &  &  & K\left(t_{m},t_{m}\right)
\end{pmatrix}.\label{eq:a30}
\end{equation}

\begin{thm}
\label{thm:opF2}Let $K$, $X$, $\left\{ t_{i}\right\} _{i=1}^{m}$,
and $\left\{ y_{i}\right\} _{i=1}^{m}$ be as above, and let $K_{m}$
be the induced sample matrix (\ref{eq:a30}). Fix $\beta>0$; consider
the optimization problem with 
\begin{equation}
Q_{\beta,\left\{ t_{i}\right\} ,\left\{ y_{i}\right\} }\left(f\right)=\sum_{i=1}^{m}\left|y_{i}-f\left(t_{i}\right)\right|^{2}+\beta\left\Vert f\right\Vert _{\mathcal{H}\left(K\right)}^{2},\quad f\in\mathcal{H}\left(K\right).\label{eq:a31}
\end{equation}
Then the unique solution to (\ref{eq:a31}) is given by 
\begin{equation}
F\left(\cdot\right)=\sum_{i=1}^{m}\left(K_{m}+\beta I_{m}\right)_{i}^{-1}K\left(\cdot,t_{i}\right)\:\mbox{on \ensuremath{X};}\label{eq:a32}
\end{equation}
i.e., $F=\arg\min Q$ on $\mathcal{H}\left(K\right)$. 
\end{thm}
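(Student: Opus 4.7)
The plan is to derive Theorem \ref{thm:opF2} as a direct specialization of Theorem \ref{thm:opF} to the discrete counting measure $\mu=\sum_{i=1}^{m}\delta_{t_{i}}$ associated with the sample set $\{t_{i}\}_{i=1}^{m}$. Under this identification $L^{2}(\mu)\cong \ell^{2}_{m}\cong \mathbb{R}^{m}$, the evaluation operator $Tf=(f(t_{i}))_{i=1}^{m}$ in \eqref{eq:a25} is precisely the operator appearing in \eqref{eq:ao1}, and the quadratic form $Q_{\psi,\beta}$ with $\psi=y=(y_{i})$ reduces to $Q_{\beta,\{t_{i}\},\{y_{i}\}}$ in \eqref{eq:a31}. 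Existence and uniqueness of the minimizer, together with the abstract formula $F=(\beta I+T^{*}T)^{-1}T^{*}y$, are then immediate consequences of Theorem \ref{thm:opF}. What remains is to rewrite this abstract solution in the explicit closed form \eqref{eq:a32}.

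The key step is the standard \emph{push-through} identity
\begin{equation}
(\beta I+T^{*}T)^{-1}T^{*}=T^{*}(\beta I_{m}+TT^{*})^{-1},
\label{eq:push}
\end{equation}
valid for any bounded (or, in our densely defined closable setting, suitably extended) operator $T$ and any $\beta>0$. I would verify \eqref{eq:push} by the algebraic observation that $T^{*}(\beta I_{m}+TT^{*})=(\beta I+T^{*}T)T^{*}$, and then invert on both sides; this is legitimate because $\beta>0$ forces both $\beta I+T^{*}T$ and $\beta I_{m}+TT^{*}$ to be strictly positive, hence invertible. Combining \eqref{eq:push} with the already-established formulas $T^{*}y=\sum_{i=1}^{m}y_{i}K(\cdot,t_{i})$ from \eqref{eq:a26} and $TT^{*}=K_{m}$ from \eqref{eq:a29} yields
\begin{equation*}
F=T^{*}(\beta I_{m}+K_{m})^{-1}y=\sum_{i=1}^{m}\bigl((\beta I_{m}+K_{m})^{-1}y\bigr)_{i}\,K(\cdot,t_{i}),
\end{equation*}
which is exactly \eqref{eq:a32}.

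There is no serious obstacle; the only point requiring a small amount of care is the identification of $L^{2}(\mu)$ with $\ell^{2}_{m}$ for the counting measure on the finite sample set, and the verification that under this identification the adjoint computed in the abstract setting \eqref{eq:ao4} coincides with the formula \eqref{eq:a26} (this is a one-line computation using the reproducing property $f(t_{i})=\langle K(\cdot,t_{i}),f\rangle_{\mathcal{H}(K)}$). Uniqueness of $F$ is inherited from Theorem \ref{thm:opF}, and can alternatively be seen directly from strict convexity of $Q$, since $\beta\|f\|_{\mathcal{H}(K)}^{2}$ is strictly convex for $\beta>0$.
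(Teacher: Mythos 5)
Your proposal is correct, and it takes the same overall route as the paper: both specialize Theorem \ref{thm:opF} to the counting measure $\mu=\sum_{i=1}^{m}\delta_{t_{i}}$, and both then use the identifications $T^{*}y=\sum_{i}y_{i}K(\cdot,t_{i})$ from (\ref{eq:a26}) and $TT^{*}=K_{m}$ from (\ref{eq:a29}). The only genuine difference is the concluding algebra. The paper works from the normal equation $\beta F+T^{*}TF=T^{*}y$, rewrites it as $\beta F(\cdot)=\sum_{i}\left(y_{i}-F(t_{i})\right)K(\cdot,t_{i})$, evaluates at the sample points to obtain the finite system $\beta\vec{F}=K_{m}(\vec{y}-\vec{F})$, solves $\vec{F}=(\beta I_{m}+K_{m})^{-1}K_{m}\vec{y}$, and back-substitutes via the matrix identity $I_{m}-(\beta I_{m}+K_{m})^{-1}K_{m}=\beta(\beta I_{m}+K_{m})^{-1}$. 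You instead apply the push-through identity $(\beta I+T^{*}T)^{-1}T^{*}=T^{*}(\beta I_{m}+TT^{*})^{-1}$ directly to the abstract solution $F=(\beta I+T^{*}T)^{-1}T^{*}y$; this is equivalent (the matrix identity the paper invokes is just the push-through identity in finite-dimensional disguise) but slightly more streamlined, and it has the additional merit of making explicit that the coefficients in (\ref{eq:a32}) are to be read as $\bigl((K_{m}+\beta I_{m})^{-1}\vec{y}\bigr)_{i}$ --- the dependence on $\vec{y}$ is suppressed in the displayed formula of the theorem, and both your computation and the paper's proof produce exactly this expression. Your remarks that $T$ is bounded (finite rank) in this sampled setting, so the closability hypothesis of Theorem \ref{thm:opF} is automatic, and that uniqueness also follows directly from strict convexity of the penalized functional for $\beta>0$, are both correct.
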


\begin{proof}
From \thmref{opF}, we get that the unique solution $F\in\mathcal{H}\left(K\right)$
is given by: 
\[
\beta F+T^{*}TF=T^{*}y,
\]
and by (\ref{eq:a28})-(\ref{eq:a29}), we further get 
\begin{equation}
\beta F\left(\cdot\right)=\sum_{i=1}^{m}\left(y_{i}-F\left(t_{i}\right)\right)K\left(\cdot,t_{i}\right)\label{eq:a33}
\end{equation}
where the dot $\cdot\phantom{}$ refers to a free variable in $X$.
An evaluation of (\ref{eq:a33}) on the sample points yields: 
\begin{equation}
\beta\vec{F}=K_{m}\left(\vec{y}-\vec{F}\right)\label{eq:a34}
\end{equation}
where $\vec{F}:=\left(F\left(t_{i}\right)\right)_{i=1}^{m}$, and
$\vec{y}=\left(y_{i}\right)_{i=1}^{m}$. Hence 
\begin{equation}
\vec{F}=\left(\beta I_{m}+K_{m}\right)^{-1}K_{m}\vec{y}.\label{eq:a35}
\end{equation}
Now substitute (\ref{eq:a35}) into (\ref{eq:a34}), and the desired
conclusion in the theorem follows. We used the matrix identity 
\[
I_{m}-\left(\beta I_{m}+K_{m}\right)^{-1}K_{m}=\beta\left(\beta I_{m}+K_{m}\right)^{-1}.
\]
\end{proof}

\subsection{\label{subsec:gf}The Case of Gaussian Fields}

For a number of applications, it will be convenient to consider general
stochastic processes $\left(X_{s}\right)$ indexed by $s\in S$, where
$S$ is merely a \emph{set}; so not \emph{a priori} equipped with
any additional structure. Consideration of stochastic processes will
always assume some fixed probability space, $\left(\Omega,\mathscr{A},\mathbb{P}\right)$
where $\Omega$ is a set of sample points; $\mathscr{A}$ is a $\sigma$-algebra
of events, fixed at the outset; and $\mathbb{P}$ is a probability
measure defined on $\mathscr{A}$. A given process $\left(X_{s}\right)_{s\in S}$
is then said to be \emph{Gaussian} and centered iff (Def.) for all
choice of finite subsets of $S$ ($s_{1},s_{2},\cdots,s_{N}$), then
the system of random variables $\left\{ X_{s_{i}}\right\} _{i=1}^{N}$
is jointly Gaussian, i.e., the joint distribution of $\left\{ X_{s_{i}}\right\} _{i=1}^{N}$
on $\mathbb{R}^{N}$ is the Gaussian $g_{N}\left(x_{1},\cdots,x_{N}\right)$
which has mean zero, and covariance matrix 
\begin{equation}
K_{ij}^{\left(N\right)}:=\left(\mathbb{E}\left(X_{s_{i}}X_{s_{j}}\right)\right)_{i,j=1}^{N};
\end{equation}
so for $x=\left(x_{1},\cdots,x_{N}\right)\in\mathbb{R}^{N}$, 
\begin{equation}
g_{N}\left(x\right)=\left(2\pi\right)^{-N/2}\det\big(K^{\left(N\right)}\big)^{-1/2}e^{-\frac{1}{2}x^{T}K^{\left(N\right)^{-1}}x.}
\end{equation}
If $A_{N}\subset\mathbb{R}^{N}$ is a Borel set, then 
\begin{equation}
\mathbb{P}\left(\left(X_{s_{1}},\cdots,X_{s_{N}}\right)\in A_{N}\right)=\int_{A_{N}}g_{N}\left(x\right)d^{N}x
\end{equation}
holds. Note we consider the joint distributions for all finite subsets
of $S$.

Let $S$ be a set, and let $\left\{ X_{s}\right\} _{s\in S}$ be a
Gaussian process with $\mathbb{E}\left(X_{s}\right)=0$, $\forall s\in S$;
and with 
\begin{equation}
\mathbb{E}\left(\overline{X}_{s}X_{t}\right):=K_{X}\left(s,t\right)\label{eq:g1}
\end{equation}
as its covariance kernel. Finally, let $\mathcal{H}\left(K\right)$
be the corresponding RKHS.

\emph{Then the following general results hold} (see e.g., \cite{MR2793121,MR2966130,MR3231624,MR3402823,MR3450534,MR3507188,MR3721329,MR3687240,MR3736758,Jorgensen2018}): 
\begin{enumerate}
\item \label{enu:g1}Every positive definite kernel $S\times S\xrightarrow{\;K\;}\mathbb{C}$
arises as in (\ref{eq:g1}) from some Gaussian process $\left\{ X_{s}\right\} _{s\in S}$. 
\item \label{enu:g2}Assume $\mathcal{H}\left(K\right)$ is separable; then
we have a representation $\left\{ f_{n}\right\} _{n\in\mathbb{N}}$
for a system of functions $f_{n}:S\rightarrow\mathbb{C}$, $n\in\mathbb{N}$,
\begin{equation}
K\left(s,t\right)=\sum_{n\in\mathbb{N}}\overline{f_{n}\left(s\right)}f_{n}\left(t\right),\label{eq:g2}
\end{equation}
absolutely convergent on $S\times S$. 
\item A system $\left\{ f_{n}\right\} _{n\in\mathbb{N}}$ satisfies (\ref{enu:g2})
if and only if it forms a \emph{Parseval frame} in $\mathcal{H}\left(K\right)$. 
\item \label{enu:g4}Given (\ref{eq:g2}), then, for every sequence of independent
identically distributed (i.i.d.) Gaussian system $\left\{ Z_{n}\right\} _{n\in\mathbb{N}},$$Z_{n}\sim N\left(0,1\right)$,
i.e., each $Z_{n}$ is a standard Gaussian random variable, $\mathbb{E}\left(Z_{n}\right)=0$,
$\mathbb{E}\left(Z_{n}Z_{m}\right)=\delta_{n,m}$; the representation
\begin{equation}
X_{s}\left(\cdot\right)=\sum_{n\in\mathbb{N}}f_{n}\left(s\right)Z_{n}\left(\cdot\right)\label{eq:fd1}
\end{equation}
is valid in $L^{2}$ of the underlying probability space, and $\mathbb{E}\left(X_{s}X_{t}\right)=K\left(s,t\right)$
on $S\times S$.
\end{enumerate}
\begin{note}
When a fixed Gaussian process $(X_{s})$ is given, then the associated
decomposition (\ref{eq:fd1}) is called a Karhunen-Loève (KL) transform
for $X_{s}$. The conclusion from (\ref{enu:g1})--(\ref{enu:g4})
above is that there is a direct connection between the two KL transforms
the relatively better known KL-transforms for positive definite kernels
(\cite{JoSo07a} Theorem 4.15).

The object in principal component analysis (PCA) is to find optimal
representations; and to select from them the ``leading terms'',
the principal components.
\end{note}

\begin{rem}
The present general kernel framework (RKHSs and Gaussian processes)
encompasses the special case we outlined in \cite{JoSo07a} Example
3.1. By way of comparison, note that the particular positive definite
kernel in the latter example is only a special case of the present
ones, see (\ref{eq:g2}) and (\ref{eq:fd1}). These types of kernels
are often referred to as the case of Mercer kernels; see also \cite{MR2228737,MR2327597,MR2558684,MR2488871}.
A Mercer kernel is continuous, and it defines a trace class operator,
as illustrated in the example. This latter feature in turn leads to
a well defined ``top part of the spectrum.'' And this then allows
us to select the principal components; i.e., the maximally correlated
variables. We shall show, in \thmref{rcpt} below, that there is an
alternative approach to principal components which applies to the
general class of positive definite kernels, and so goes far beyond
the case of Mercer kernels. 
\end{rem}

\subsection{\label{subsec:of}Optimization and Frames}

Fix a p.d. kernel $K$ on $S:=\mathbb{C}^{m}$, i.e., a functional
$K:\mathbb{C}^{m}\times\mathbb{C}^{m}\rightarrow\mathbb{C}$ satisfying
(\ref{eq:pd1}); and let $\mathcal{H}\left(K\right)$ be the associated
RKHS. In PCA, one solves the quadratic optimization problem: 
\begin{equation}
\argmax\left\{ \left\Vert QX\right\Vert _{HS}^{2}\mathrel{:}\left\Vert Q\right\Vert _{HS}^{2}=k\right\} ,\label{eq:q1}
\end{equation}
where $Q$ is a selfadjoint projection.

Kernel PCA, by contrast, solves a similar problem in $\mathcal{H}\left(K\right)$:
\begin{equation}
\argmax\left\{ \left\Vert Q\Phi\left(X\right)\right\Vert _{HS}^{2}\mathrel{:}\left\Vert Q\right\Vert _{HS}^{2}=k\right\} ,\label{eq:q2}
\end{equation}
where 
\begin{equation}
\Phi\left(X\right)=\begin{bmatrix}\Phi\left(x_{1}\right) & \cdots & \Phi\left(x_{n}\right)\end{bmatrix}.\label{eq:q3}
\end{equation}
It is understood that $\left\Vert \cdot\right\Vert _{HS}$ as in (\ref{eq:q2})
refers to the Hilbert-Schmidt class in $B\left(\mathcal{H}\left(K\right)\right)$.
\begin{flushleft}
\textbf{A Finite Frame} 
\par\end{flushleft}

Let $X$, $K$, and $\Phi$ be as above. Then $\left(\Phi\left(x_{j}\right)\right)_{j=1}^{n}$
is a finite frame whose span $\mathcal{H}_{\Phi}$ is a closed subspace
in $\mathcal{H}\left(K\right)$.

Set $L:\mathcal{H}\left(K\right)\rightarrow\mathbb{C}^{n}$ by 
\begin{equation}
Lf=\sum_{j=1}^{n}\left\langle \Phi\left(x_{j}\right),f\right\rangle _{\mathcal{H}\left(K\right)}\delta_{j},\;f\in\mathcal{H}\left(K\right),\label{eq:q4}
\end{equation}
where $\delta_{j}$ is the standard ONB in $\mathbb{C}^{m}$. The
adjoint $L^{*}:\mathbb{C}^{n}\rightarrow\mathcal{H}\left(K\right)$
is given by 
\begin{equation}
L^{*}c=\sum_{j=1}^{n}\Phi\left(x_{j}\right)c_{j},\;c=\left(c_{j}\right)\in\mathbb{C}^{n}.\label{eq:q5}
\end{equation}
It follows that 
\begin{equation}
L^{*}L=\sum_{j=1}^{n}\left|\Phi\left(x_{j}\right)\left\rangle \right\langle \Phi\left(x_{j}\right)\right|=\Phi\left(X\right)\Phi\left(X\right)^{*}.\label{eq:A9}
\end{equation}

\begin{lem}
\label{lem:rk1}Let $L^{*}L$ be as in (\ref{eq:A9}), and let 
\[
G:=L^{*}L=\sum\lambda_{j}^{2}\left|\phi_{j}\left\rangle \right\langle \phi_{j}\right|
\]
be the corresponding spectral representation, with $\lambda_{1}^{2}\geq\lambda_{2}^{2}\geq\cdots$.
Then 
\begin{align}
\left|\phi_{1}\left\rangle \right\langle \phi_{1}\right| & =\argmax\left\{ \left\Vert Q_{1}\Phi\left(X\right)\right\Vert _{HS}^{2}\mathrel{:}\left\Vert Q_{1}\right\Vert _{HS}^{2}=1,\;Q_{1}=Q_{1}^{*}=Q_{1}^{2}\right\} \label{eq:ar1}\\
 & =\argmax\left\{ tr\left(Q_{1}G\right)\mathrel{:}\left\Vert Q_{1}\right\Vert _{HS}^{2}=1\right\} .\nonumber 
\end{align}
Equivalently, the best rank-1 approximation to $G$ is 
\[
\lambda_{1}^{2}\left|\phi_{1}\left\rangle \right\langle \phi_{1}\right|.
\]
\end{lem}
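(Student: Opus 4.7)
The plan is to verify the equalities in (\ref{eq:ar1}) in order, reducing each to a scalar spectral inequality, and then to extract the best rank-one approximation as a corollary.

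First I would note that for any rank-one self-adjoint projection $Q_{1}$ one has automatically $\|Q_{1}\|_{HS}^{2}=\mathrm{tr}(Q_{1})=1$ and $Q_{1}^{*}Q_{1}=Q_{1}$, so by cyclicity of the trace and identity (\ref{eq:A9}),
\[
\|Q_{1}\Phi(X)\|_{HS}^{2}=\mathrm{tr}\bigl(\Phi(X)^{*}Q_{1}\Phi(X)\bigr)=\mathrm{tr}\bigl(Q_{1}\Phi(X)\Phi(X)^{*}\bigr)=\mathrm{tr}(Q_{1}G).
\]
This equates the two argmax problems as one-line identities. Next, writing $Q_{1}=|u\rangle\langle u|$ with $\|u\|_{\mathcal{H}(K)}=1$ and expanding via the given spectral decomposition $G=\sum_{j}\lambda_{j}^{2}|\phi_{j}\rangle\langle\phi_{j}|$,
\[
\mathrm{tr}(Q_{1}G)=\langle u,Gu\rangle=\sum_{j}\lambda_{j}^{2}\,|\langle\phi_{j},u\rangle|^{2},
\]
which is a convex combination of the eigenvalues $\lambda_{j}^{2}$ (the coefficients are nonnegative and sum to $\|u\|^{2}=1$) and is therefore bounded above by the maximum $\lambda_{1}^{2}$. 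Equality is attained exactly when $u$ lies in the top eigenspace, giving $|\phi_{1}\rangle\langle\phi_{1}|$ as the argmax, uniquely up to a phase when $\lambda_{1}^{2}$ is simple. This is the rank-one specialization of the variational principle already used in \thmref{smallest_error}.

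For the best rank-one approximation assertion, I would parametrize a candidate as $T=t|u\rangle\langle u|$ with $\|u\|=1$ and $t\in\mathbb{R}$, expand
\[
\|G-T\|_{HS}^{2}=\|G\|_{HS}^{2}-2t\langle u,Gu\rangle+t^{2},
\]
and optimize first in $t$ to get $t^{\star}=\langle u,Gu\rangle$; then optimizing in $u$ reduces precisely to the previous maximization, producing $u=\phi_{1}$, $t^{\star}=\lambda_{1}^{2}$, and the optimum $\lambda_{1}^{2}|\phi_{1}\rangle\langle\phi_{1}|$. Restricting to self-adjoint rank-one $T$ costs nothing, since the closest rank-one operator to a positive self-adjoint $G$ is itself self-adjoint and positive, as one sees either from the SVD (Eckart--Young) or from a polar-decomposition argument.

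There is no serious obstacle here; the content is the variational characterization of the top eigenvalue plus the rank-one Eckart--Young theorem. The only delicate point is degeneracy: if $\lambda_{1}^{2}$ has multiplicity greater than one, the argmax is the full set of unit-trace rank-one projections onto the top eigenspace, rather than literally $|\phi_{1}\rangle\langle\phi_{1}|$, and this should be flagged; it does not affect the equality of optimal values, nor the subsequent inductive extension to higher-rank projections $Q_{k}$ in later results.
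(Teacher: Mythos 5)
Your argument is correct and follows essentially the same route as the paper's proof: the identity $\left\Vert Q_{1}\Phi\left(X\right)\right\Vert _{HS}^{2}=tr\left(Q_{1}G\right)$ via (\ref{eq:A9}), followed by writing $Q_{1}=\left|u\left\rangle \right\langle u\right|$ and bounding $\sum_{j}\lambda_{j}^{2}\left|\left\langle \phi_{j},u\right\rangle \right|^{2}$ as a convex combination of the eigenvalues, with equality exactly on the top eigenspace. Your additional Eckart--Young-style verification of the best rank-one approximation statement (which the paper only asserts as ``equivalently'') and your remark about non-uniqueness when $\lambda_{1}^{2}$ is degenerate are both sound and are welcome supplements rather than deviations.
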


\begin{rem}
\label{rem:ork}Note that the conclusion of the lemma yields a solution
the optimization problem we introduced above. Indeed, in the statement
of the lemma (see (\ref{eq:ar1})) we use the standard notation \emph{argmax}
for the data which realizes a particular optimization. In the present
case, we are maximizing a certain quadratic expression over the unit-ball
in the Hilbert-Schmidt operators. The Hilbert-Schmidt norm is designated
with the subscript HS. Part of the conclusion of the lemma asserts
that the maximum, as specified in (\ref{eq:ar1}), is attained for
a definite rank-one operator. 
\end{rem}

\begin{proof}[Proof of \lemref{rk1}]
Note that 
\[
\left\Vert Q_{1}\Phi\left(X\right)\right\Vert _{HS}^{2}=tr\left(Q_{1}\Phi\left(X\right)\Phi\left(X\right)^{*}\right)=tr\left(Q_{1}G\right).
\]
Let $w$ be a unit vector in $\mathcal{H}\left(K\right)$, and set
$Q_{1}=\left|w\left\rangle \right\langle w\right|$; then 
\begin{align}
tr\left(Q_{1}G\right) & =\sum_{j}\lambda_{j}^{2}\left|\left\langle w,\phi_{j}\right\rangle \right|^{2}.\label{eq:A1}
\end{align}
Since $\sum_{j}\left|\left\langle w,\phi_{j}\right\rangle \right|^{2}=\left\Vert w\right\Vert ^{2}=1$,
the r.h.s. of (\ref{eq:A1}) is a convex combination of $\lambda_{j}^{2}$'s;
therefore, 
\[
\sum_{j}\lambda_{j}^{2}\left|\left\langle w,\phi_{j}\right\rangle \right|^{2}\leq\lambda_{1}^{2}
\]
and equality holds if and only if $\left|\left\langle w,\phi_{1}\right\rangle \right|=1$,
and $\left\langle w,\phi_{j}\right\rangle =0$, for $j>1$.

Note that, given (\ref{eq:A1}), the conclusion of the lemma follows
directly from comparison of positive series. 
\end{proof}
\lemref{rk1} can be applied inductively which yields the best rank-1
approximation at each iteration. In fact, the result holds more generally;
see \thmref{rcpt} below. 

Our present RKHS/$L^{2}(\mu)$ framework is close to that of \cite{MR3560890}.
But we have included the results we need in our present framework,
and for our purpose.
\begin{thm}
\label{thm:rcpt} (a) Let $T:\mathcal{H}\rightarrow\mathcal{H}$ be
a Hilbert Schmidt operator, and let 
\begin{equation}
TT^{*}=\sum\lambda_{j}^{2}\left|\phi_{j}\left\rangle \right\langle \phi_{j}\right|\label{eq:hs}
\end{equation}
with $\lambda_{1}^{2}\geq\lambda_{2}^{2}\geq\cdots$, and $\lambda_{j}\rightarrow0$
as $j\rightarrow\infty$. Then 
\[
\sum_{j=1}^{n}\left|\phi_{j}\left\rangle \right\langle \phi_{j}\right|=\argmax\left\{ \left\Vert Q_{n}T\right\Vert _{HS}^{2}\mathrel{:}\left\Vert Q_{n}\right\Vert _{HS}^{2}=n,\;Q_{n}=Q_{n}^{*}=Q_{n}^{2}\right\} .
\]
Note that $\left\Vert Q_{n}T\right\Vert _{HS}^{2}=tr\left(Q_{n}G\right)$,
where $G:=TT^{*}$. See \cite{JoSo07a}. (b) Recall the assumption
that $T$ is Hilbert Schmidt is equivalent to $T^{*}T$ trace class.
We then apply the Spectral Theorem to $T^{*}T$ yielding an orthonormal
basis of eigenvectors for $T^{*}T$, and a convergent representation
for the infinite sum in (\ref{eq:hs}).
\end{thm}

\begin{proof}
Let $Q_{n}=\sum_{j=1}^{n}P_{j}$, where each $P_{j}$ is rank-1, and
$P_{i}P_{j}=0$, for $i\neq j$. Then, 
\[
\left\Vert Q_{n}T\right\Vert _{HS}^{2}=\sum_{j=1}^{n}\left\Vert P_{j}T\right\Vert _{HS}^{2},\quad\left\Vert P_{j}\right\Vert _{HS}^{2}=1.
\]
Let $\mathcal{L}$ be the corresponding Lagrangian, i.e., 
\begin{align*}
\mathcal{L} & =\sum_{j=1}^{n}\left\Vert P_{j}T\right\Vert _{HS}^{2}-\sum_{j=1}^{n}\mu_{j}\left(\left\Vert P_{j}\right\Vert _{HS}^{2}-1\right)\\
 & =\sum_{j=1}^{n}\left\langle P_{j},TT^{*}P_{j}\right\rangle _{HS}-\sum_{j=1}^{n}\mu_{j}\left(\left\langle P_{j},P_{j}\right\rangle _{HS}-1\right).
\end{align*}
It follows that 
\begin{gather}
\frac{\partial\mathcal{L}}{\partial P_{k}}=2\left(TT^{*}P_{k}-\mu_{k}P_{k}\right)=0\label{eq:tm1}\\
\Updownarrow\nonumber \\
TT^{*}P_{k}=\mu_{k}P_{k},\;\mu_{k}=\lambda_{k}^{2}.\nonumber 
\end{gather}
Hence $P_{k}$ is a spectral projection of $TT^{*}$. The conclusion
of the theorem follows from this. 

Note that one may use the usual variational directional derivative
argument applied to a sesquilinear form $\left(x,x\right)\mapsto\left\langle x,Tx\right\rangle _{\mathscr{H}}$
in \emph{any} Hilbert space $\mathscr{H}$, and obtain 
\[
\frac{d}{dx}\left\langle x,Tx\right\rangle _{\mathscr{H}}=Tx+T^{*}x,\quad x\in\mathscr{H}.
\]
In particular, if $T$ is self-adjoint, then 
\[
\frac{d}{dx}\left\langle x,Tx\right\rangle _{\mathscr{H}}=2Tx.
\]
Especially, the Hilbert space in (\ref{eq:tm1}) is the space of Hilbert-Schmidt
operators. (This argument is a key step in the proof of the spectral
theorem, i.e., taking the variational directional derivative of a
bounded bilinear form in Hilbert space.)

The precise meaning of (and the justification for) the generalized
gradient assertion (\ref{eq:tm1}) is included in \lemref{gg} below.
See also the following citations of sources for the underlying operator
theory.
\end{proof}
The variational arguments in the proof of \thmref{rcpt}, used in
the present setting of Hilbert space, and the variety of projections,
can be justified with standard tools from operator theory, including
the spectral theorem; see e.g., \cite[ch.2]{MR3642406}, \cite[ch.1]{MR4274591},
\cite[ch.2]{MR1070713} and also \cite{ArKa06,MR1913212,MR2367342,MR2558684}.
We have included more details below: 
\begin{lem}
\label{lem:gg}Suppose $T$ is a Hilbert-Schmidt (HS) operator in
a separable Hilbert space $\mathscr{H}$. For the directional derivative
$\nabla_{T}:=\frac{\partial}{\partial T}$, we have 
\[
\frac{\partial}{\partial T}\left\langle x,Tx\right\rangle _{\mathscr{H}}=\left|x\left\rangle \right\langle x\right|,\quad x\in\mathscr{H}.
\]
(Here, $\left|x\left\rangle \right\langle x\right|$ is the rank-1
operator $\mathscr{H}\ni u\mapsto\left\langle x,u\right\rangle x$
using Dirac's notation.) 
\end{lem}

\begin{proof}
 Let $Q=\left|x\left\rangle \right\langle x\right|$. Note that $f_{Q}\left(T\right):=\left\langle x,Tx\right\rangle =\text{trace}\left(TQ\right)$.
In fact, if $\left\{ e_{n}\right\} _{1}^{\infty}$ is an orthonormal
basis in $\mathscr{H}$, then 
\begin{align*}
\text{trace}\left(TQ\right) & =\sum_{n=1}^{\infty}\left\langle x,e_{n}\right\rangle _{\mathscr{H}}\left\langle e_{n},Tx\right\rangle _{\mathscr{H}}\\
 & =\left\langle x,Tx\right\rangle _{\mathscr{H}},\quad\text{by the Parseval identity.}
\end{align*}
But 
\begin{equation}
\text{trace}\left(TQ\right)=\left\langle Q,T\right\rangle _{HS}\label{eq:hs1}
\end{equation}
where $\left\langle \cdot,\cdot\right\rangle _{HS}$ denotes the Hilbert-Schmidt
inner product. 

Now we use a general fact about inner products: 
\[
\left|\left\langle Q,T\right\rangle _{HS}\right|^{2}\leq\left\langle Q,Q\right\rangle _{HS}\left\langle T,T\right\rangle _{HS},\quad\forall Q,T\in\mathscr{H}S
\]
where $\mathscr{H}S=$ the Hilbert-Schmidt operators, viewed as a
Hilbert space, relative to its trace-inner product (\ref{eq:hs1}).
That is, the Schwarz inequality is applied to the $\left\langle \cdot,\cdot\right\rangle _{HS}$-inner
product. One obtains maximum when $T\sim Q$. In particular, if $Q=\left|x\left\rangle \right\langle x\right|$,
then $Q^{2}=\left\Vert x\right\Vert ^{2}Q$ and $\text{trace}\left(Q^{2}\right)=\left\Vert x\right\Vert _{\mathscr{H}}^{4}$. 

Therefore, it follows from $f_{Q}\left(T\right)=\left\langle Q,T\right\rangle _{HS}$
that
\[
\nabla_{T}f_{Q}=\left\langle Q,\cdot\right\rangle _{HS},
\]
which is identified with $Q=\left|x\left\rangle \right\langle x\right|$,
since 
\[
\left(\mathscr{H}S\right)^{*}=\left(\mathscr{H}S\right).
\]
\end{proof}
\begin{rem}
Our use of frame analysis serves as the technical tools for ``error
estimates.'' More specifically, frames are designed such that, for
a given problem involving finite-dimensional PCA computations, the
frame estimates are designed to allow estimation of the corresponding
``error terms.'' (See \cite{JoSo07a}.)

For our present considerations of optimization questions, it may be
of interest to consider a comparison between the following two settings,
one general, and the other special: On the one hand, there is a variety
of (i) general calculus of variation issues in infinite-dimensional
contexts; and on the other, (ii) particular optimization questions
in the restricted framework of specific choices of pairs of Hilbert
spaces (still infinite-dimensional). We note that the context for
(i) is wider than that of (ii). On the other hand (as outlined below),
for the specialized framework of (ii) considered here, the use of
standard Hilbert space geometry, and a little operator theory, offer
much simplification. 

\emph{Detail}: Our present focus is (ii). By contrast, in consideration
of (i) one must address separately such technical issues as (a) functional-derivatives,
and (b) existence of extremal (optimal) solutions. By contrast, the
smaller class of optimization questions from (ii) lend themselves
directly to computation in Hilbert space. Here we note that application
of Hilbert space geometry will then simplify the two issues (a) and
(b) in (i). The point we make for our present optimization question
(i.e., a specific choice of a Hilbert space context (ii)), is that
a \textquotedblleft natural\textquotedblright{} choice of \emph{a
pair of Hilbert spaces} will be a combination of the two: an appropriate
RKHS, and a compatible $L^{2}\left(\mu\right)$-Hilbert space. This
then allows for relatively simple solutions. 
\end{rem}

\subsection{\label{subsec:dual}The Dual Problem}

Fix a data set $X=\left(x_{j}\right)_{j=1}^{n}$, $x_{j}\in\mathbb{C}^{m}$.
Let $\Phi:X\rightarrow\mathcal{H}\left(K\right)$ be the feature map
in (\ref{eq:q3}), i.e., 
\begin{equation}
\Phi\left(X\right)=\begin{bmatrix}\Phi\left(x_{1}\right) & \cdots & \Phi\left(x_{n}\right)\end{bmatrix}.\label{eq:du1}
\end{equation}
Let $L$, $L^{*}$ be the analysis and synthesis operators from (\ref{eq:q4})-(\ref{eq:q5}),
and 
\begin{equation}
L^{*}L:\mathcal{H}\left(K\right)\longrightarrow\mathcal{H}\left(K\right),\quad L^{*}Lf=\sum_{j=1}^{n}\left\langle \Phi\left(x_{j}\right),f\right\rangle \Phi\left(x_{j}\right)\label{eq:dp2}
\end{equation}
be the frame operator in (\ref{eq:A9}). In particular, 
\[
L^{*}L=\Phi\left(X\right)\Phi\left(X\right)^{*}.
\]

In view of \cite{JoSo07a} and \thmref{rcpt}, the KL basis for $L^{*}L$
contains the principal directions carrying the greatest variance in
$\Phi\left(X\right)$. In applications, it is more convenient to first
find the KL basis of $LL^{*}$ instead, where 
\begin{equation}
LL^{*}=\Phi\left(X\right)^{*}\Phi\left(X\right)=\left(K\left(x_{i},x_{j}\right)\right)_{ij=1}^{n},\label{eq:d1}
\end{equation}
as an $n\times n$ matrix in $\mathbb{C}^{n}$; see (\ref{eq:A2}).
(By general theory, if $A:\mathcal{H}\rightarrow\mathcal{H}$ is a
linear operator in a Hilbert space with dense domain, then $\sigma\left(A^{*}A\right)\backslash\left\{ 0\right\} =\sigma\left(AA^{*}\right)\backslash\left\{ 0\right\} $.) 
\begin{thm}
\label{prop:sel}Set $A:\mathbb{C}^{n}\rightarrow\mathcal{H}\left(K\right)$
by 
\begin{equation}
A\delta_{j}=\Phi\left(x_{j}\right),\label{eq:A3}
\end{equation}
and extend linearly, where $\left(\delta_{j}\right)_{j=1}^{n}$ denotes
the standard basis in $\mathbb{C}^{n}$. Then the adjoint operator
$A^{*}:\mathcal{H}\left(K\right)\rightarrow\mathbb{C}^{n}$ is 
\begin{equation}
A^{*}h=\left(\left\langle \Phi\left(x_{j}\right),h\right\rangle \right)_{j=1}^{n}\in\mathbb{C}^{n}.\label{eq:A4}
\end{equation}
That is, $A=L^{*}$ and $A^{*}=L$. 
\end{thm}

\begin{proof}
Let $v\in\mathbb{C}^{n}$, and $h\in\mathcal{H}$, then 
\begin{align*}
\left\langle Av,h\right\rangle _{\mathcal{H}\left(K\right)} & =\left\langle \sum\nolimits _{j=1}^{n}v_{j}\Phi\left(x_{j}\right),h\right\rangle _{\mathcal{H}\left(K\right)}\\
 & =\sum\nolimits _{j=1}^{n}\overline{v}_{j}\left\langle \Phi\left(x_{j}\right),h\right\rangle _{\mathcal{H}\left(K\right)}=\left\langle v,\left(\left\langle \Phi\left(x_{j}\right),h\right\rangle _{\mathcal{H}\left(K\right)}\right)\right\rangle _{\mathbb{C}^{n}},
\end{align*}
and the assertions follows. 
\end{proof}
Hence $LL^{*}:\mathbb{C}^{n}\rightarrow\mathbb{C}^{n}$ is the Gramian
matrix in $\mathbb{C}^{n}$ given by 
\begin{eqnarray}
LL^{*} & = & \Phi\left(X\right)^{*}\Phi\left(X\right)\nonumber \\
 & = & \left(\left\langle \Phi\left(x_{i}\right),\Phi\left(x_{j}\right)\right\rangle _{\mathcal{H}\left(K\right)}\right)_{i,j=1}^{n}\nonumber \\
 & \underset{\text{by \ensuremath{\left(\ref{eq:A2}\right)}}}{=} & \left(K\left(x_{i},x_{j}\right)\right)_{i,j=1}^{n};\label{eq:A5}
\end{eqnarray}
see (\ref{eq:d1}).

By the singular value decomposition, $L^{*}=WDU^{*}$, so that 
\begin{align}
LL^{*} & =UD^{2}U^{*},\label{eq:A6a}\\
L^{*}L & =WD^{2}W^{*},\label{eq:A6b}
\end{align}
where $D=\text{diag}\left(\lambda_{j}\right)$ consists of the non-negative
eigenvalues of $\sqrt{LL^{*}}$. Therefore, 
\[
L^{*}=\sum\lambda_{j}\left|w_{j}\left\rangle \right\langle u_{j}\right|.
\]

Note that $W=\left(w_{j}\right)$ is the KL basis that diagonalizes
$L^{*}L$ as in (\ref{eq:dp2}), i.e., 
\begin{equation}
L^{*}L=\sum_{j=1}^{n}\lambda_{j}^{2}\left|w_{j}\left\rangle \right\langle w_{j}\right|.
\end{equation}
It also follows from (\ref{eq:A6a})--(\ref{eq:A6b}), that 
\begin{equation}
W=L^{*}UD^{-1}.\label{eq:A8}
\end{equation}

\begin{rem}
\label{rem:In-the-above}In the above discussion, $\Phi\left(X\right)$
may be centered by removing its mean. Specifically, let 
\[
J=1-\frac{1}{n}\left|\mathbbm{1}\left\rangle \right\langle \mathbbm{1}\right|
\]
be the projection onto $\text{span}\left\{ \mathbbm{1}\right\} ^{\perp}$,
where $\mathbbm{1}$ denotes the constant vector $\begin{bmatrix}1 & \cdots & 1\end{bmatrix}$.
Then 
\begin{equation}
\widetilde{\Phi}\left(X\right):=\Phi\left(X\right)-\frac{1}{n}\sum_{j=1}^{n}\Phi\left(x_{j}\right)=\Phi\left(X\right)J,\label{eq:pc1}
\end{equation}
and so 
\begin{equation}
LL^{*}=\widetilde{\Phi}\left(X\right)^{*}\widetilde{\Phi}\left(X\right)=J\Phi\left(X\right)^{*}\Phi\left(X\right)J.\label{eq:pc2}
\end{equation}
The effect of $J$ in (\ref{eq:pc2}) is to exclude the eigenspace
of the Gramian $\left(K\left(x_{i},x_{j}\right)\right)_{i,j=1}^{n}$
spanned by the constant eigenvector.

In what follows, we shall always assume $\Phi\left(X\right)$ is centered
as in (\ref{eq:pc1})-(\ref{eq:pc2}). 
\end{rem}

\subsection{\label{subsec:FS}Feature Selection}

Feature selection, also called variable selection, or attribute selection,
is a procedure for automatic selection of those attributes in data
sets which are most relevant to particular predictive modeling problems.
Which features should one use in designs of predictive models? This
is a difficult question that requires detailed knowledge of the problem
at hand. The aim is algorithmic designs which automatically select
those features from prescribed data, which are most useful, or most
relevant, for the particular problem. The process is called feature
selection. A central premise of feature selection is that the input
data will contain features that are either redundant or irrelevant,
and can therefore be removed. The use of sample correlations in the
process is based in turn on the following principle: A particular
relevant feature might be redundant, in the presence of some other
relevant feature, with which it is strongly correlated.

Our present purpose is not a systematic treatment of feature selection,
but merely to identify how our present tools suggest recursive algorithms
in the general area. With this in mind we now consider the following
setup:

Let $x\in\mathbb{C}^{m}$ be a test example. The image $\Phi\left(x\right)$
under the feature map can be projected onto the principal directions
in $\mathcal{H}\left(K\right)$, via 
\[
\Phi\left(x\right)\longmapsto WW^{*}\Phi\left(x\right).
\]
The mapping $x\mapsto WW^{*}\Phi\left(x\right)$ is in general nonlinear.
See Examples \ref{exa:sc} and \ref{exa:dr}, and Figures \ref{fig:bc}
and \ref{fig:bc-1} below. We now turn to our main result which is
the following corollary.
\begin{cor}
\label{cor:pce}Let $L^{*}=WDU^{*}$ be as above, assuming $D$ is
full rank. For all $x\in\mathbb{C}^{m}$, the coefficients of the
projection $WW^{*}\Phi\left(x\right)$ are 
\begin{equation}
W^{*}\Phi\left(x\right)=\begin{bmatrix}\lambda_{1}^{-1}\sum_{j=1}^{n}\overline{u_{j1}}K\left(x_{j},x\right)\\
\vdots\\
\lambda_{n}^{-1}\sum_{j=1}^{n}\overline{u_{jn}}K\left(x_{j},x\right)
\end{bmatrix}.\label{eq:bc1}
\end{equation}
\end{cor}

\begin{proof}
By (\ref{eq:A8}), $W^{*}=D^{-1}U^{*}L$, so that 
\begin{eqnarray*}
W^{*}\Phi\left(x\right) & = & D^{-1}U^{*}L\Phi\left(x\right)\\
 & \underset{\left(\ref{eq:A4}\right)}{=} & D^{-1}U^{*}\begin{bmatrix}\left\langle \Phi\left(x_{1}\right),\Phi\left(x\right)\right\rangle _{\mathcal{H}\left(K\right)}\\
\vdots\\
\left\langle \Phi\left(x_{n}\right),\Phi\left(x\right)\right\rangle _{\mathcal{H}\left(K\right)}
\end{bmatrix}\\
 & \underset{\left(\ref{eq:A2}\right)}{=} & D^{-1}U^{*}\begin{bmatrix}K\left(x_{1},x\right)\\
\vdots\\
K\left(x_{n},x\right)
\end{bmatrix}\\
 & = & \begin{bmatrix}\lambda_{1}^{-1}\sum_{j=1}^{n}\overline{u_{j1}}K\left(x_{j},x\right)\\
\vdots\\
\lambda_{n}^{-1}\sum_{j=1}^{n}\overline{u_{jn}}K\left(x_{j},x\right)
\end{bmatrix}.
\end{eqnarray*}
\end{proof}
\begin{example}[Spectral Clustering, see \figref{bc}]
\label{exa:sc} This is included as an instructive example. The data
set $X$ has two classes: Class ``0'' consists of $867$ points
uniformly distributed in $\{x\in\mathbb{R}^{3}\mathrel{;}0.6<\left\Vert x\right\Vert ^{2}<1\}$;
class ``1'' consists of 126 points in the open ball $\{x\in\mathbb{R}^{3}\mathrel{;}\left\Vert x\right\Vert ^{2}<0.2\}$.
Hence $X$ has dimension $3\times993$, and each column $x_{j}$ of
$X$ corresponds to a sample point.

We choose the Gaussian kernel $K\left(x,y\right)=e^{-\left\Vert x-y\right\Vert ^{2}/\sigma}$
with $\sigma=0.05$, so that $x_{j}$ is embedded into the associated
RKHS by 
\[
\mathbb{R}^{3}\ni x_{j}\longmapsto\Phi\left(x_{j}\right)=e^{-\left\Vert \cdot-x_{j}\right\Vert ^{2}/\sigma}\in\mathcal{H}\left(K\right).
\]
By projecting $\Phi\left(x_{j}\right)$ onto the first two principal
components, each sample point $x_{j}\in\mathbb{R}^{3}$ has a 2D representation
via the mapping 
\[
\mathbb{R}^{3}\ni x_{j}\longmapsto\Phi\left(x_{j}\right)\longmapsto\begin{bmatrix}\lambda_{1}^{-1}\sum_{k=1}^{n}\overline{u_{k1}}K\left(x_{k},x_{j}\right)\\
\lambda_{2}^{-1}\sum_{k=1}^{n}\overline{u_{k2}}K\left(x_{k},x_{j}\right)
\end{bmatrix}\in\mathbb{R}^{2}.
\]
See the general formula in \corref{pce}; note that $n=2$ in the
present example.

As shown in \figref{bc2}, the two clusters are linearly separable
in $\mathcal{H}\left(K\right)$. 
\end{example}

\begin{figure}
\subfloat[]{\includegraphics[width=0.4\columnwidth]{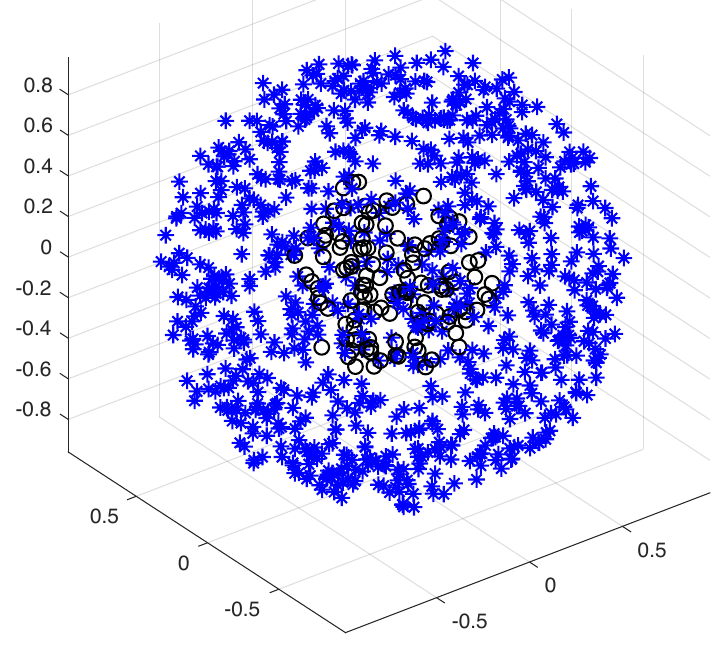}

}\hfill{}\subfloat[\label{fig:bc2}]{\includegraphics[width=0.4\columnwidth]{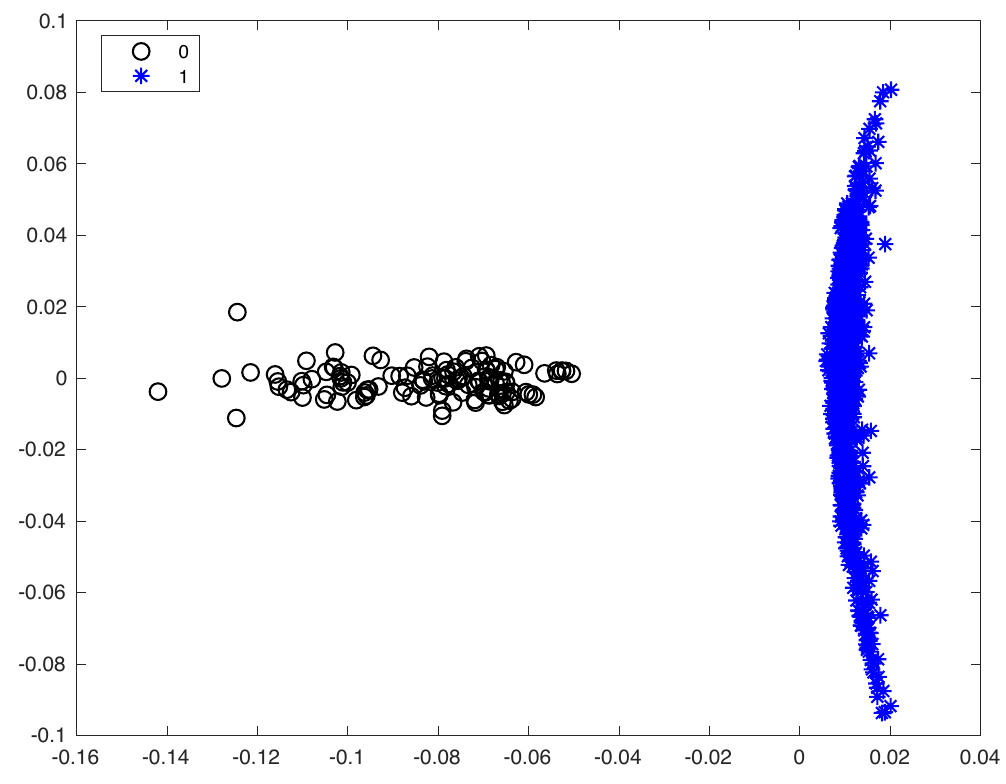}

}

\caption[Spectral clustering and linear decision boundary via Gaussian kernels
and KPCA.]{\label{fig:bc}Spectral clustering via Gaussian kernel. (A) The two
classes \textquotedblleft$\ast$\textquotedblright , and \textquotedblleft o\textquotedblright{}
are not separable by standard PCA. (B) KPCA with Gaussian kernel $K\left(x,y\right)=e^{-\left\Vert x-y\right\Vert ^{2}/\sigma}$,
$\sigma=0.05$. Project $\Phi\left(X\right)$ onto the first two principal
components, then the resulting 2D representation is separable by a
linear decision boundary in $\mathcal{H}\left(K\right)$. \protect \\
This figure is an illustration of the feature selection in \corref{pce}.
For the present binary clustering problem (\textquotedblleft$*$\textquotedblright{}
vs \textquotedblleft o\textquotedblright ), the kernel $K$ is centered
using (\ref{eq:pc1})--(\ref{eq:pc2}), and the vector $\Phi\left(X\right)$
with entries in the Gaussian RKHS $\mathcal{H}\left(K\right)$ is
projected onto the first two column vectors of the matrix $W$ using
(\ref{eq:bc1}). For a fixed sample point $x$, the two projection
coefficients are $\lambda_{1}^{-1}\sum_{j=1}^{n}\overline{u_{j1}}K\left(x_{j},x\right)$
and $\lambda_{2}^{-1}\sum_{j=1}^{n}\overline{u_{j2}}K\left(x_{2},x\right)$.}
\end{figure}

\begin{example}[Dimension Reduction, see \figref{bc-1}]
\label{exa:dr} Let $X$ be a collection of 100 grayscale $256\times256$
images of an ellipse, rotated successively by $\pi/100$. \figref{krot1}
shows 6 sample images corresponding to different rotation angles.
The images are unrolled as column vectors, thus $X$ has dimension
$65536\times100$.

This data set may be viewed as 1D submanifold embedded in $\mathbb{R}^{65536}$,
i.e., it has only one degree of freedom, the rotation angle. For dimension
reduction, KPCA will ideally extract this information, and each image
is then represented by a single projection coefficient. We choose
the Gaussian kernel with $\sigma=300$. In \figref{krot2}, there
are 4 subplots consisting of the projections onto the first, second,
third, and fourth principal directions. The rotation angle is encoded
in e.g. PC 1. As a consequence, the dimension of the data set is reduced
from $65536\times100$ to $1\times100$.

In particular, the projection coefficients onto the $k^{th}$ principal
direction (see \figref{krot2}, PC1 -- PC4) are proportional to the
$k^{th}$ eigenvector of the centered Gramian $G:=JK\left(x_{i},x_{j}\right)J$;
see (\ref{eq:pc2}) and \corref{pce}. 
\end{example}

\begin{figure}
\subfloat[\label{fig:krot1}]{\includegraphics[width=0.6\columnwidth]{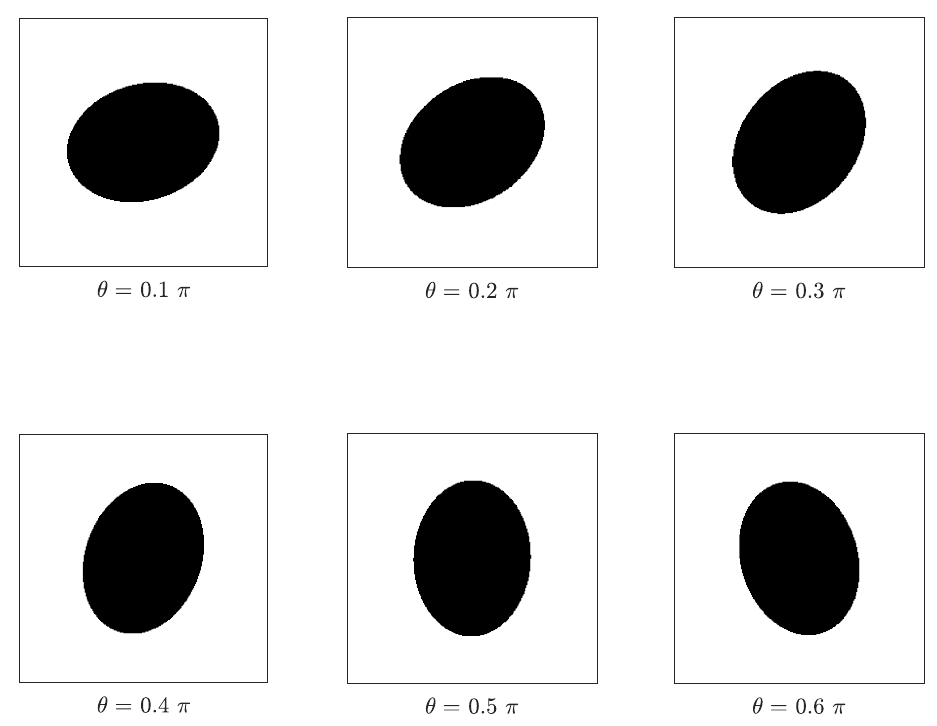}

}

\subfloat[\label{fig:krot2}]{\includegraphics[width=0.6\columnwidth]{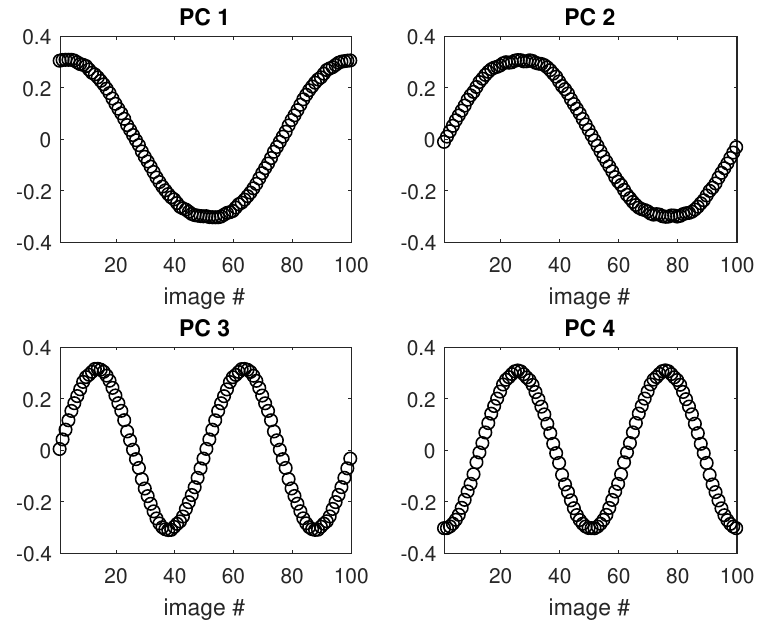}

}

\caption[Non-linear data, and detection of rotation angles via KPCA.]{\label{fig:bc-1}Detection of rotation angle by KPCA. (A) Sample
points from the data set $X$ of 100 grayscale images of an ellipse,
rotated successively by $\pi/100$, each has resolution $256\times256$.
(B) Apply kernel PCA with Gaussian kernel $K\left(x,y\right)=e^{-\left\Vert x-y\right\Vert ^{2}/\sigma}$,
$\sigma=300$, then project $\Phi\left(X\right)$ onto the first,
second, third, and forth principal directions in $\mathcal{H}\left(K\right)$.
The rotation angle is captured in e.g. PC 1.\protect \\
This figure illustrates KPCA feature selection using \corref{pce}.
The Gaussian kernel is centered by (\ref{eq:pc1})--(\ref{eq:pc2}).
For each fixed sample point $x$, the vector $\Phi\left(x\right)$
in the Gaussian RKHS $\mathcal{H}\left(K\right)$ is projected onto
the first 4 column vectors of the matrix $W$ as in (\ref{eq:bc1}).
The resulting projection coefficients are given by $\lambda_{1}^{-1}\sum_{k=1}^{n}\overline{u_{k1}}K\left(x_{k},x\right)$,
$\lambda_{2}^{-1}\sum_{k=1}^{n}\overline{u_{k2}}K\left(x_{k},x\right)$,
$\lambda_{3}^{-1}\sum_{k=1}^{n}\overline{u_{k3}}K\left(x_{k},x\right)$
and $\lambda_{4}^{-1}\sum_{k=1}^{n}\overline{u_{k4}}K\left(x_{k},x\right)$.}
\end{figure}

\subsection{\label{subsec:dyn}Dynamic PCA}

In some applications, the tool of \emph{Dynamic PCA} (DPCA), see e.g.,
\cite{MR2314342} serves as a useful tool for \textbf{high-dimensional
and time-dependent data}. The idea is that in favorable cases, with
the use of DPCA, one can arrange that the input matrices can be augmented
by addition of time-lagged values of the variables under consideration.
In summary, the method is based on the behavior of the eigenvalues
of the lagged autocorrelation, and partial autocorrelation, matrices.

Our present \thmref{rcpt} states that when a system of PCA eigenvalues
$\left\{ \lambda_{i}\right\} $ is given, then we have a algorithmic
solution to the corresponding optimization question (see (\ref{eq:q1})-(\ref{eq:q2})).
We now turn to a formula for generating PCA features as a limit of
a certain iteration of operators. The family of operators $T$ discussed
below is a generalization of the operators from \subsecref{of} above.

When principal component analysis (PCA) is used for problems involving
stochastic (statistical) processes, for example in monitoring applications,
it often relies on an implicit, but unrealistic, assumption that data
involved are time independent. This however is evidently unrealistic,
as work with industrial data shows: In a host of diverse applications,
one is faced with serial correlations. Hence, the literature over
recent years (see e.g., \cite{MR4188895,MR4131352,MR4097183,MR4085822,MR4010088})
has witnessed a host of new approaches to PCA, going by what is now
called dynamic PCA (DPCA) methods. DPCA-models involve a variety of
stochastic and dynamical systems-tools, each one serving as a remedy
for plain-vanilla PCA applied to data as they appear in: (i) in extremely
high-dimensions, (ii) in time-dependent data-sets, and (iii) in digital
image processing. A main feature of DPCA is time-feedback; --- hence
in DPCA, the input-matrix is dynamically augmented, thus taking the
form of a transfer operator;--- in the simplest models, it operates
by dynamic additions of time-lagged values of the variables. 
\begin{prop}
\label{prop:pei}Suppose $T:\mathcal{H}\rightarrow\mathcal{H}$ is
compact, and $T^{*}T$ has simple spectrum, i.e., 
\[
T^{*}T=\sum_{j=1}^{\infty}\lambda_{j}^{2}\left|u_{j}\left\rangle \right\langle u_{j}\right|
\]
with $\lambda_{1}^{2}>\lambda_{2}^{2}>\cdots>0$; $\lambda_{j}^{2}\rightarrow0$
as $j\rightarrow\infty$. Setting $S=\sqrt{T^{*}T}$, then, 
\begin{equation}
\lim_{n\rightarrow\infty}\frac{\left\Vert S^{n+1}x\right\Vert ^{2}}{\left\Vert S^{n}x\right\Vert ^{2}}=\lambda_{1}^{2},\quad\forall x\;\text{s.t. }\left\langle u_{1},x\right\rangle \neq0.
\end{equation}

Moreover, given $\lambda_{1}$, then 
\begin{equation}
\lim_{n\rightarrow\infty}\lambda_{1}^{-2n}\left(T^{*}T\right)^{n}=\left|u_{1}\left\rangle \right\langle u_{1}\right|,\label{eq:p2}
\end{equation}
where convergence in (\ref{eq:p2}) is w.r.t. the norm topology of
$B\left(\mathcal{H}\right)$. 
\end{prop}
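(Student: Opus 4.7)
The plan is to reduce both statements to the spectral calculus for the compact positive self-adjoint operator $S := T^*T$, with resolution $S=\sum_{j\ge 1}\lambda_j^{2}\,|u_j\rangle\langle u_j|$ and strict gap $\lambda_1^{2}>\lambda_2^{2}>\cdots\to 0$; the latter is the engine that forces the geometric decay appearing in both limits.

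I would begin with the operator-norm statement \eqref{eq:p2}. The continuous functional calculus gives $S^n=\sum_j\lambda_j^{2n}|u_j\rangle\langle u_j|$, so
\[
\lambda_1^{-2n}S^n-|u_1\rangle\langle u_1| \;=\; \sum_{j\ge 2}\Bigl(\tfrac{\lambda_j}{\lambda_1}\Bigr)^{2n}|u_j\rangle\langle u_j|.
\]
The right-hand side is a compact positive self-adjoint operator; its operator norm equals its largest eigenvalue, which by the ordering is $(\lambda_2/\lambda_1)^{2n}$. Simplicity of the spectrum gives $\lambda_2/\lambda_1<1$, so this ratio decays geometrically to zero, yielding norm-convergence in $B(\mathcal{H})$.

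For the first limit, I would expand $x=\sum_j c_j u_j$ with $c_j=\langle u_j,x\rangle$. Reading $\|T^n x\|^2=\langle x,S^n x\rangle$ (immediate when $T$ is self-adjoint or normal, since then $T^*$ commutes with $T$; in the general compact case the natural substitute is to iterate $|T|=S^{1/2}$, equivalently to interpret the quotient as a Rayleigh quotient for $S$), one computes
\[
\frac{\|T^{n+1}x\|^2}{\|T^n x\|^2} \;=\; \lambda_1^{2}\cdot\frac{|c_1|^2+\sum_{j\ge 2}(\lambda_j/\lambda_1)^{2n+2}|c_j|^2}{|c_1|^2+\sum_{j\ge 2}(\lambda_j/\lambda_1)^{2n}|c_j|^2}.
\]
The hypothesis on $x$ is to be read as $c_1\ne 0$ (that is, $x\notin\ker(S-\lambda_1^{2}I)^{\perp}$; the statement's $\notin\ker(\cdot)$ appears to be a typographical slip, since the literal reading excludes precisely the vectors for which the conclusion is trivially true). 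Each $(\lambda_j/\lambda_1)^{2n}\to 0$ for $j\ge 2$ and is dominated by $|c_j|^2$, with $\sum|c_j|^2=\|x\|^2<\infty$; dominated convergence drives both remainder sums to zero, and the ratio tends to $\lambda_1^{2}$.

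The one place requiring care is the identity $\|T^n x\|^2=\langle x,S^n x\rangle$ that underlies the first limit. For positive self-adjoint or normal $T$ this is immediate, and the power-method argument above goes through verbatim; this regime already covers the main applications in the paper, where $T$ has the form $L^*L$. For an arbitrary compact $T$ the iterates $T^n$ and $|T|^n$ disagree, and the clean formulation is instead $\|S^{n+1}x\|/\|S^n x\|\to\lambda_1^{2}$, proved by exactly the expansion above. In either reading, the spectral gap $\lambda_1>\lambda_2$ supplied by simplicity is the decisive ingredient, and I do not anticipate any further obstacle beyond making this interpretive choice explicit.
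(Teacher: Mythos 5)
Your proposal is correct and follows essentially the same route as the paper: expand $x$ in the eigenbasis of $T^{*}T$, factor out $\lambda_{1}^{2}$ and use the spectral gap for the Rayleigh-quotient limit, and read off \eqref{eq:p2} from the spectral representation of $\left(T^{*}T\right)^{n}$. If anything, your version is slightly more careful than the paper's at two points the paper passes over silently: you bound the remainder in \eqref{eq:p2} by its largest eigenvalue $\left(\lambda_{2}/\lambda_{1}\right)^{2n}$ rather than by the (possibly divergent) sum $\sum_{j\geq2}\left(\lambda_{j}/\lambda_{1}\right)^{2n}$, and you flag that the identity $\left\Vert T^{n}x\right\Vert ^{2}=\left\langle x,\left(T^{*}T\right)^{n}x\right\rangle$, used without comment in the paper, requires $T$ normal (or the reformulation in terms of powers of $T^{*}T$), as well as the intended reading of the hypothesis as $\left\langle u_{1},x\right\rangle \neq0$.
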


\begin{proof}
Note that 
\begin{align*}
\lim_{n\rightarrow\infty}\frac{\left\Vert S^{n+1}x\right\Vert ^{2}}{\left\Vert S^{n}x\right\Vert ^{2}} & =\lim_{n\rightarrow\infty}\frac{\left\langle x,S^{2n+2}x\right\rangle }{\left\langle x,S^{2n}x\right\rangle }\\
 & =\lim_{n\rightarrow\infty}\frac{\left\langle x,\left(T^{*}T\right)^{n+1}x\right\rangle }{\left\langle x,\left(T^{*}T\right)^{n}x\right\rangle }\\
 & =\lim_{n\rightarrow\infty}\frac{\sum\lambda_{j}^{2\left(n+1\right)}\left|\left\langle u_{j},x\right\rangle \right|^{2}}{\sum\lambda_{j}^{2n}\left|\left\langle u_{j},x\right\rangle \right|^{2}}\\
 & =\lim_{n\rightarrow\infty}\lambda_{1}^{2}\frac{\left|\left\langle u_{1},x\right\rangle \right|^{2}+\sum_{j\geq2}\left(\lambda_{j}/\lambda_{1}\right)^{2\left(n+1\right)}\left|\left\langle u_{j},x\right\rangle \right|^{2}}{\left|\left\langle u_{1},x\right\rangle \right|^{2}+\sum_{j\geq2}\left(\lambda_{j}/\lambda_{1}\right)^{2n}\left|\left\langle u_{j},x\right\rangle \right|^{2}}\\
 & =\lambda_{1}^{2}.
\end{align*}

Now, given $\lambda_{1}$, we have 
\begin{align*}
\left\Vert \lambda_{1}^{-2n}\left(T^{*}T\right)^{n}-\left|u_{1}\left\rangle \right\langle u_{1}\right|\right\Vert  & =\left\Vert \sum\nolimits _{j\geq2}\left(\lambda_{j}/\lambda_{1}\right)^{2n}\left|u_{j}\left\rangle \right\langle u_{j}\right|\right\Vert \\
 & \leq\sum\nolimits _{j\geq2}\left(\lambda_{j}/\lambda_{1}\right)^{2n}\xrightarrow[\;n\rightarrow\infty\;]{}0.
\end{align*}
\end{proof}
\begin{cor}
\label{cor:The-system-of}The system of PCA eigenvalues $\left\{ \lambda_{j}\right\} $
can be obtained inductively as follows: Let $S=\sqrt{T^{*}T}$ as
before, and set $Q_{k}=\sum_{j=1}^{k}\left|u_{j}\left\rangle \right\langle u_{j}\right|$.
Then 
\[
\lambda_{k+1}^{2}=\lim_{n\rightarrow\infty}\frac{\left\Vert \left(SQ_{k}^{\perp}\right)^{n+1}x\right\Vert ^{2}}{\left\Vert \left(SQ_{k}^{\perp}\right)^{n}x\right\Vert },\quad\forall x\;\text{s.t.}\;\left\langle u_{k+1},x\right\rangle \neq0,
\]
and 
\[
\left|u_{k+1}\left\rangle \right\langle u_{k+1}\right|=\lim_{n\rightarrow\infty}\lambda_{k}^{-2n}\left(Q_{k}^{\perp}T^{*}TQ_{k}^{\perp}\right)^{n}.
\]
\end{cor}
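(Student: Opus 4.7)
The plan is to derive the corollary as a direct application of Proposition \ref{prop:pei} to the \emph{deflated operator}
\[
S_k := T\, Q_k^{\perp},
\]
where $Q_k^{\perp} = I - Q_k$ is the orthogonal projection onto $\overline{\mathrm{span}}\{u_{k+1}, u_{k+2}, \ldots\}$. The strategy is classical power-iteration deflation: once the top $k$ principal directions are stripped away, the $(k+1)$-st one becomes the new leading principal direction of the restricted operator.

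First I would compute $S_k^{*} S_k$ explicitly. Since $Q_k^{\perp}$ is self-adjoint, we have $S_k^{*} S_k = Q_k^{\perp} T^{*} T\, Q_k^{\perp}$. The crucial observation is that $Q_k$, being the spectral projection of $T^{*}T$ onto the eigenspaces for $\lambda_1^2, \ldots, \lambda_k^2$, commutes with $T^{*}T$. Hence
\[
S_k^{*} S_k \;=\; Q_k^{\perp}\, T^{*}T \;=\; \sum_{j \geq k+1} \lambda_j^2 \, |u_j\rangle\langle u_j|,
\]
which is again compact, positive, and of simple spectrum, with largest eigenvalue $\lambda_{k+1}^2$ and one-dimensional top eigenspace $\mathbb{C}\, u_{k+1}$. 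In particular, $S_k$ itself is compact and $S_k^{*}S_k$ satisfies the hypotheses of Proposition \ref{prop:pei}.

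Next I would apply Proposition \ref{prop:pei} to $S_k$ in place of $T$. The first conclusion of the proposition yields
\[
\lim_{n\to\infty} \frac{\|S_k^{\,n+1} x\|^2}{\|S_k^{\,n} x\|^2} \;=\; \lambda_{k+1}^2
\]
for every $x$ outside $\ker(S_k^{*}S_k - \lambda_{k+1}^2 I)$, which is precisely the first displayed identity of the corollary (reading the subscript in the kernel condition as the natural $\lambda_{k+1}$). The second conclusion of the proposition gives
\[
\lim_{n\to\infty} \lambda_{k+1}^{-2n} \bigl(S_k^{*}S_k\bigr)^n \;=\; |u_{k+1}\rangle\langle u_{k+1}|
\]
in the operator norm of $B(\mathcal{H})$, and substituting $S_k^{*}S_k = Q_k^{\perp} T^{*}T\, Q_k^{\perp}$ produces the second displayed identity of the corollary.

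The only non-trivial step is the commutation $[Q_k, T^{*}T] = 0$, which is immediate from the given spectral representation of $T^{*}T$. Consequently there is no genuine obstacle: the result is a routine deflation argument, and the inductive generation of the entire system $\{\lambda_j, u_j\}$ proceeds by iterating this construction, since $Q_{k+1} = Q_k + |u_{k+1}\rangle\langle u_{k+1}|$ is built from the previously extracted data.
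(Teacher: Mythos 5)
Your proposal is correct and follows essentially the same route as the paper: the paper's proof likewise applies Proposition \ref{prop:pei} to the deflated operator $TQ_k^{\perp}$, after noting that $\left(TQ_k^{\perp}\right)^{*}\left(TQ_k^{\perp}\right)=Q_k^{\perp}T^{*}TQ_k^{\perp}=\sum_{j\geq k+1}\lambda_j^{2}\left|u_j\right\rangle \left\langle u_j\right|$. Your write-up merely makes explicit the commutation $[Q_k,T^{*}T]=0$ (which the paper leaves as ``one checks'') and the reading of the subscripts as $\lambda_{k+1}$, both consistent with the paper's intent.
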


\begin{proof}
One checks that 
\begin{align*}
SQ_{k}^{\perp} & =S\left(1-Q_{k}\right)=\sum_{j=k+1}^{\infty}\lambda_{j}\left|u_{j}\left\rangle \right\langle u_{j}\right|,\\
\left(SQ_{k}^{\perp}\right)^{*} & \left(SQ_{k}^{\perp}\right)=Q_{k}^{\perp}T^{*}TQ_{k}^{\perp};
\end{align*}
and so the assertion follows from \propref{pei}. 
\end{proof}
\begin{example}
\label{exa:bm}Consider the covariance function of standard Brownian
motion $B_{t}$, $t\in[0,\infty)$, i.e., a Gaussian process $\left\{ B_{t}\right\} $
with mean zero and covariance function 
\begin{equation}
\mathbb{E}\left(B_{s}B_{t}\right)=s\wedge t=\min\left(s,t\right).\label{eq:bm1}
\end{equation}
Let $F_{N}=\left\{ x_{1},x_{2},\ldots,x_{N}\right\} $ be a finite
subsets of $V$, such that 
\[
0<x_{1}<x_{2}<\cdots<x_{N};
\]
and let 
\begin{equation}
K_{N}=\begin{bmatrix}x_{1} & x_{1} & x_{1} & \cdots & x_{1}\\
x_{1} & x_{2} & x_{2} & \cdots & x_{2}\\
x_{1} & x_{2} & x_{3} & \cdots & x_{3}\\
\vdots & \vdots & \vdots & \vdots & \vdots\\
x_{1} & x_{2} & x_{3} & \cdots & x_{N}
\end{bmatrix}=\left(x_{i}\wedge x_{j}\right)_{i,j=1}^{N}.\label{eq:bm4}
\end{equation}
\end{example}

\begin{lem}
Let $K_{N}$ be as in (\ref{eq:bm4}). Then 
\begin{enumerate}
\item The determinant of $K_{N}$ is given by 
\begin{equation}
\det\left(K_{N}\right)=x_{1}\left(x_{2}-x_{1}\right)\left(x_{3}-x_{2}\right)\cdots\left(x_{N}-x_{N-1}\right).\label{eq:bm5}
\end{equation}
\item $K_{N}$ assumes the LU decomposition 
\begin{equation}
K_{N}=A_{N}A_{N}^{*},
\end{equation}
where 
\begin{equation}
A_{n}=\begin{bmatrix}\sqrt{x_{1}} & 0 & 0 & \cdots & 0\\
\sqrt{x_{1}} & \sqrt{x_{2}-x_{1}} & 0 & \cdots & \vdots\\
\sqrt{x_{1}} & \sqrt{x_{2}-x_{1}} & \sqrt{x_{3}-x_{2}} & \ddots & \vdots\\
\vdots & \vdots & \vdots & \ddots & 0\\
\sqrt{x_{1}} & \sqrt{x_{2}-x_{1}} & \sqrt{x_{3}-x_{2}} & \cdots & \sqrt{x_{N}-x_{N-1}}
\end{bmatrix}.\label{eq:H31}
\end{equation}
\end{enumerate}
\end{lem}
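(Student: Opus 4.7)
The plan is to prove item (2) first, and then deduce item (1) as an immediate corollary. The key observation is that $A_N$, as written in (\ref{eq:H31}), is lower triangular with the convention $x_0 := 0$: its entries are
\[
(A_N)_{ij} = \begin{cases} \sqrt{x_j - x_{j-1}} & \text{if } j \le i, \\ 0 & \text{if } j > i. \end{cases}
\]
So the first task is to verify directly that $A_N A_N^{\ast} = K_N$ by computing the $(i,j)$ entry. One has
\[
(A_N A_N^{\ast})_{ij} \;=\; \sum_{k=1}^{N} (A_N)_{ik}\,\overline{(A_N)_{jk}} \;=\; \sum_{k=1}^{\min(i,j)} \bigl(x_k - x_{k-1}\bigr),
\]
because the summand vanishes unless $k \le i$ and $k \le j$, and the square roots are real. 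The right-hand side telescopes to $x_{\min(i,j)} - x_0 = x_i \wedge x_j$, which matches (\ref{eq:bm4}). This establishes the factorization.

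For item (1), since $A_N$ is lower triangular, $\det(A_N) = \prod_{k=1}^{N} \sqrt{x_k - x_{k-1}}$. Hence
\[
\det(K_N) \;=\; \det(A_N)\det(A_N^{\ast}) \;=\; \det(A_N)^2 \;=\; \prod_{k=1}^{N} (x_k - x_{k-1}) \;=\; x_1(x_2-x_1)\cdots(x_N-x_{N-1}),
\]
using again the convention $x_0 = 0$, which gives precisely (\ref{eq:bm5}).

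There is really no obstacle here; the only point that needs care is keeping the indexing convention $x_0 = 0$ consistent, so that the telescoping sum in the computation of $(A_N A_N^{\ast})_{ij}$ produces $x_{\min(i,j)}$ rather than a shifted quantity. One could alternatively derive (\ref{eq:bm5}) directly by elementary row operations (subtracting each row from the one below produces an echelon form whose diagonal entries are $x_1, x_2-x_1, \dots, x_N - x_{N-1}$), but once (2) is in hand, the determinant formula is a one-line consequence, so I would present the Cholesky-style verification first as the conceptual heart and let the determinant drop out.
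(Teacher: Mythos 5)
Your proof is correct and complete. Note that the paper itself does not spell out an argument for this lemma; it simply defers to the reference cited there, so your write-up supplies exactly the verification that is left implicit. Your computation is the standard Cholesky-type argument one would expect: with the convention $x_{0}=0$, the entrywise identity
\[
\bigl(A_{N}A_{N}^{*}\bigr)_{ij}=\sum_{k=1}^{\min(i,j)}\bigl(x_{k}-x_{k-1}\bigr)=x_{\min(i,j)}=x_{i}\wedge x_{j}
\]
uses the monotonicity $0<x_{1}<x_{2}<\cdots<x_{N}$ both to make the square roots real (so the factorization is genuine and exhibits $K_{N}$ as positive definite) and to identify $x_{\min(i,j)}$ with $x_{i}\wedge x_{j}$; it is worth stating that second point explicitly, since the lemma is phrased in terms of the minimum of the values, not of the indices. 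The deduction of the determinant formula from the triangularity of $A_{N}$ is then immediate, and your alternative remark about row reduction is a perfectly good independent route to item (1). In short: no gap, and the order of presentation (factorization first, determinant as a corollary) is the natural one.
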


\begin{proof}
For details, see e.g., \cite{MR3450534}. 
\end{proof}
\begin{example}
Fix $N$, then the top eigenvalue of $K:=K_{N}$ can be extracted
by the method from \propref{pei}. For instance, if $N=3$ and $F_{3}=\left\{ 1,2,3\right\} $,
we have 
\[
K=\begin{bmatrix}1 & 1 & 1\\
1 & 2 & 2\\
1 & 2 & 3
\end{bmatrix}.
\]
Let $e_{1}=\left[1,0,0\right]$, then 
\[
\hat{\lambda}_{1}=\frac{\left\Vert K^{3}e_{1}\right\Vert }{\left\Vert K^{2}e_{1}\right\Vert }\approx5.0455,\quad\hat{v}_{1}=\frac{\lambda_{1}^{-5}K^{5}e_{1}}{\left\Vert \lambda_{1}^{-5}K^{5}e_{1}\right\Vert }\approx\begin{pmatrix}0.3284 & 0.5913 & 0.7366\end{pmatrix}^{T}.
\]
Standard numerical algorithm returns 
\[
\lambda_{1}\approx5.0489,\quad v_{1}\approx\begin{pmatrix}0.3280 & 0.5910 & 0.7370\end{pmatrix}^{T}.
\]
\end{example}

\begin{acknowledgement*}
The co-authors thank the following colleagues for helpful and enlightening
discussions: Professors Daniel Alpay, Sergii Bezuglyi, Ilwoo Cho,
Wayne Polyzou, David Stewart, Eric S. Weber, and members in the Math
Physics seminar at The University of Iowa. The present work was started
with discussions at the NSF-CBMS Conference, Harmonic Analysis: Smooth
and Non-Smooth, held at Iowa State University, June 4-8, 2018. Jorgensen
was the main speaker. We are grateful to the organizers, especially
to Prof Eric Weber, and to the NSF for financial support. Sooran Kang
was supported by Basic Science Research Program through the National
Research Foundation of Korea (NRF) funded by the Ministry of Education
(\#NRF-2017R1D1A1B03034697 and \#NRF-2020R1F1A1A01076072). 
\end{acknowledgement*}
\bibliographystyle{amsalpha}
\bibliography{encode}

\newcommand{\etalchar}[1]{$^{#1}$}
\providecommand{\bysame}{\leavevmode\hbox to3em{\hrulefill}\thinspace}
\providecommand{\MR}{\relax\ifhmode\unskip\space\fi MR }
\providecommand{\MRhref}[2]{%
  \href{http://www.ams.org/mathscinet-getitem?mr=#1}{#2}
}
\providecommand{\href}[2]{#2}
\begin{thebibliography}{VVQCR{\etalchar{+}}19}

\bibitem[ACP13]{MR3104494}
Ery Arias-Castro and Bruno Pelletier, \emph{On the convergence of maximum
  variance unfolding}, J. Mach. Learn. Res. \textbf{14} (2013), 1747--1770.
  \MR{3104494}

\bibitem[AFS18]{MR3854652}
Carlos~M. Ala\'{\i}z, Micha\"{e}l Fanuel, and Johan A.~K. Suykens, \emph{Convex
  formulation for kernel {PCA} and its use in semisupervised learning}, IEEE
  Trans. Neural Netw. Learn. Syst. \textbf{29} (2018), no.~8, 3863--3869.
  \MR{3854652}

\bibitem[Agg18]{aggarwal2018machine}
C.C. Aggarwal, \emph{Machine learning for text}, Springer International
  Publishing, 2018.

\bibitem[AH11]{MR2819026}
Trine~Julie Abrahamsen and Lars~Kai Hansen, \emph{A cure for variance inflation
  in high dimensional kernel principal component analysis}, J. Mach. Learn.
  Res. \textbf{12} (2011), 2027--2044. \MR{2819026}

\bibitem[AHR{\etalchar{+}}20]{MR4188895}
Mohammad~Reza Askari, Iman Hajizadeh, Mudassir Rashid, Nicole Hobbs, Victor~M.
  Zavala, and Ali Cinar, \emph{Adaptive-learning model predictive control for
  complex physiological systems: automated insulin delivery in diabetes}, Annu.
  Rev. Control \textbf{50} (2020), 1--12. \MR{4188895}

\bibitem[AJ12]{MR2966130}
Daniel Alpay and Palle E.~T. Jorgensen, \emph{Stochastic processes induced by
  singular operators}, Numer. Funct. Anal. Optim. \textbf{33} (2012), no.~7-9,
  708--735. \MR{2966130}

\bibitem[AJ15]{MR3402823}
Daniel Alpay and Palle Jorgensen, \emph{Spectral theory for {G}aussian
  processes: reproducing kernels, boundaries, and {$L^2$}-wavelet generators
  with fractional scales}, Numer. Funct. Anal. Optim. \textbf{36} (2015),
  no.~10, 1239--1285. \MR{3402823}

\bibitem[AJL11]{MR2793121}
Daniel Alpay, Palle Jorgensen, and David Levanony, \emph{A class of {G}aussian
  processes with fractional spectral measures}, J. Funct. Anal. \textbf{261}
  (2011), no.~2, 507--541. \MR{2793121}

\bibitem[AJL17]{MR3687240}
\bysame, \emph{On the equivalence of probability spaces}, J. Theoret. Probab.
  \textbf{30} (2017), no.~3, 813--841. \MR{3687240}

\bibitem[AJS14]{MR3231624}
Daniel Alpay, Palle Jorgensen, and Guy Salomon, \emph{On free stochastic
  processes and their derivatives}, Stochastic Process. Appl. \textbf{124}
  (2014), no.~10, 3392--3411. \MR{3231624}

\bibitem[AK06]{ArKa06}
William Arveson and Richard~V. Kadison, \emph{Diagonals of self-adjoint
  operators}, Operator theory, operator algebras, and applications, Contemp.
  Math., vol. 414, Amer. Math. Soc., Providence, RI, 2006, pp.~247--263.
  \MR{2277215}

\bibitem[Bis06]{MR2247587}
Christopher~M. Bishop, \emph{Pattern recognition and machine learning},
  Information Science and Statistics, Springer, New York, 2006. \MR{2247587}

\bibitem[Bis13]{MR3005666}
\bysame, \emph{Model-based machine learning}, Philos. Trans. R. Soc. Lond. Ser.
  A Math. Phys. Eng. Sci. \textbf{371} (2013), no.~1984, 20120222, 17.
  \MR{3005666}

\bibitem[BJ02]{MR1913212}
Ola Bratteli and Palle Jorgensen, \emph{Wavelets through a looking glass},
  Applied and Numerical Harmonic Analysis, Birkh\"{a}user Boston, Inc., Boston,
  MA, 2002, The world of the spectrum. \MR{1913212}

\bibitem[BM03]{MR1976484}
Samuel Burer and Renato D.~C. Monteiro, \emph{A nonlinear programming algorithm
  for solving semidefinite programs via low-rank factorization}, vol.~95, 2003,
  Computational semidefinite and second order cone programming: the state of
  the art, pp.~329--357. \MR{1976484}

\bibitem[BN03]{Belkin03}
Mikhail Belkin and Partha Niyogi, \emph{Laplacian eigenmaps for dimensionality
  reduction and data representation}, Neural Computation \textbf{15} (2003),
  no.~6, 1373--1396.

\bibitem[BTA04]{MR2239907}
Alain Berlinet and Christine Thomas-Agnan, \emph{Reproducing kernel {H}ilbert
  spaces in probability and statistics}, Kluwer Academic Publishers, Boston,
  MA, 2004, With a preface by Persi Diaconis. \MR{2239907}

\bibitem[CCEL15]{MR3441732}
Jameson Cahill, Peter~G. Casazza, Martin Ehler, and Shidong Li, \emph{Tight and
  random nonorthogonal fusion frames}, Trends in harmonic analysis and its
  applications, Contemp. Math., vol. 650, Amer. Math. Soc., Providence, RI,
  2015, pp.~23--36. \MR{3441732}

\bibitem[CCK13]{MR3085820}
Jameson Cahill, Peter~G. Casazza, and Gitta Kutyniok, \emph{Operators and
  frames}, J. Operator Theory \textbf{70} (2013), no.~1, 145--164. \MR{3085820}

\bibitem[CLS{\etalchar{+}}19]{MR3900802}
Kayla~D. Coleman, Allison Lewis, Ralph~C. Smith, Brian Williams, Max Morris,
  and Bassam Khuwaileh, \emph{Gradient-free construction of active subspaces
  for dimension reduction in complex models with applications to neutronics},
  SIAM/ASA J. Uncertain. Quantif. \textbf{7} (2019), no.~1, 117--142.
  \MR{3900802}

\bibitem[CLW{\etalchar{+}}19]{9002955}
Linjun Chen, Guangquan Lu, Guoqiu Wen, Jiaye Li, and Malong Tan, \emph{A
  nonlinear clustering algorithm via kernel function and locality structure
  learning}, 2019 IEEE Symposium Series on Computational Intelligence (SSCI),
  2019, pp.~2603--2607.

\bibitem[Con90]{MR1070713}
John~B. Conway, \emph{A course in functional analysis}, second ed., Graduate
  Texts in Mathematics, vol.~96, Springer-Verlag, New York, 1990. \MR{1070713}

\bibitem[CS02]{MR1864085}
Felipe Cucker and Steve Smale, \emph{On the mathematical foundations of
  learning}, Bull. Amer. Math. Soc. (N.S.) \textbf{39} (2002), no.~1, 1--49.
  \MR{1864085}

\bibitem[CST00]{cristianini_shawe-taylor_2000}
Nello Cristianini and John Shawe-Taylor, \emph{An introduction to support
  vector machines and other kernel-based learning methods}, Cambridge
  University Press, 2000.

\bibitem[CWG19]{MR3912285}
Jia Chen, Gang Wang, and Georgios~B. Giannakis, \emph{Nonlinear dimensionality
  reduction for discriminative analytics of multiple datasets}, IEEE Trans.
  Signal Process. \textbf{67} (2019), no.~3, 740--752. \MR{3912285}

\bibitem[DSR{\etalchar{+}}21]{doi:10.1148/ryai.2021210011}
Derek Driggs, Ian Selby, Michael Roberts, Effrossyni Gkrania-Klotsas, James
  H.~F. Rudd, Guang Yang, Judith Babar, Evis Sala, and Carola-Bibiane~and
  Sch{\"o}nlieb, \emph{Machine learning for covid-19 diagnosis and
  prognostication: Lessons for amplifying the signal while reducing the noise},
  Radiology: Artificial Intelligence \textbf{3} (2021), no.~4, e210011.

\bibitem[DWGC18]{MR3878657}
Jianghu~J. Dong, Liangliang Wang, Jagbir Gill, and Jiguo Cao, \emph{Functional
  principal component analysis of glomerular filtration rate curves after
  kidney transplant}, Stat. Methods Med. Res. \textbf{27} (2018), no.~12,
  3785--3796. \MR{3878657}

\bibitem[FCB]{kpcai}
Mathieu Fauvel, Jocelyn Chanussot, and Jon~Atli Benediktsson, \emph{Kernel
  principal component analysis for feature reduction in hyperspectrale images
  analysis}.

\bibitem[FZ05]{1556085}
H.~Frohlich and A.~Zell, \emph{Efficient parameter selection for support vector
  machines in classification and regression via model-based global
  optimization}, Proceedings. 2005 IEEE International Joint Conference on
  Neural Networks, 2005., vol.~3, 2005, pp.~1431--1436 vol. 3.

\bibitem[GGB18]{MR3857315}
Alon Gonen and Ran Gilad-Bachrach, \emph{Smooth sensitivity based approach for
  differentially private principal component analysis}, Algorithmic learning
  theory 2018, Proc. Mach. Learn. Res. (PMLR), vol.~83, Proceedings of Machine
  Learning Research PMLR, [place of publication not identified], 2018, p.~13.
  \MR{3857315}

\bibitem[GJ06]{4106847}
M.~R. {Gupta} and N.~P. {Jacobson}, \emph{Wavelet principal component analysis
  and its application to hyperspectral images}, 2006 International Conference
  on Image Processing, Oct 2006, pp.~1585--1588.

\bibitem[GK19]{MR3922239}
Constantin Grigo and Phaedon-Stelios Koutsourelakis, \emph{Bayesian {M}odel and
  {D}imension {R}eduction for {U}ncertainty {P}ropagation: {A}pplications in
  {R}andom {M}edia}, SIAM/ASA J. Uncertain. Quantif. \textbf{7} (2019), no.~1,
  292--323. \MR{3922239}

\bibitem[GS20]{MR4085822}
Bastian Gribisch and Michael Stollenwerk, \emph{Dynamic principal component
  {CAW} models for high-dimensional realized covariance matrices}, Quant.
  Finance \textbf{20} (2020), no.~5, 799--821. \MR{4085822}

\bibitem[HAI{\etalchar{+}}07]{MR2314342}
Shinpei Hara, Yuta Akira, Eisuke Ishii, Masato Inoue, and Masato Okada,
  \emph{L{DPC} decoding dynamics from a {PCA} viewpoint}, Interdiscip. Inform.
  Sci. \textbf{13} (2007), no.~1, 43--48. \MR{2314342}

\bibitem[HKF{\etalchar{+}}20]{Harkat:2020tx}
M.~F. Harkat, A.~Kouadri, R.~Fezai, M.~Mansouri, H.~Nounou, and M.~Nounou,
  \emph{Machine learning-based reduced kernel pca model for nonlinear chemical
  process monitoring}, Journal of Control, Automation and Electrical Systems
  \textbf{31} (2020), no.~5, 1196--1209.

\bibitem[HKLW07]{MR2367342}
Deguang Han, Keri Kornelson, David Larson, and Eric Weber, \emph{Frames for
  undergraduates}, Student Mathematical Library, vol.~40, American Mathematical
  Society, Providence, RI, 2007. \MR{2367342}

\bibitem[HLMS04]{10.1145/1015330.1015417}
Jihun Ham, Daniel~D. Lee, Sebastian Mika, and Bernhard Sch\"{o}lkopf, \emph{A
  kernel view of the dimensionality reduction of manifolds}, Proceedings of the
  Twenty-First International Conference on Machine Learning (New York, NY,
  USA), ICML '04, Association for Computing Machinery, 2004, p.~47.

\bibitem[Hog20]{hogan2020web}
A.~Hogan, \emph{The web of data}, Springer International Publishing, 2020.

\bibitem[HWW05]{MR2193805}
Ryan Harkins, Eric Weber, and Andrew Westmeyer, \emph{Encryption schemes using
  finite frames and {H}adamard arrays}, Experiment. Math. \textbf{14} (2005),
  no.~4, 423--433. \MR{2193805}

\bibitem[JHZW19]{MR3883202}
J.~Jin, T.~Huang, J.~L. Zheng, and P.~H. Wen, \emph{Dimension reduction
  analysis with mapping and direct integration algorithm}, Eng. Anal. Bound.
  Elem. \textbf{99} (2019), 122--130. \MR{3883202}

\bibitem[JS07]{JoSo07a}
Palle E.~T. Jorgensen and Myung-Sin Song, \emph{Entropy encoding, {H}ilbert
  space, and {K}arhunen-{L}o\`{e}ve transforms}, J. Math. Phys. \textbf{48}
  (2007), no.~10, 103503.

\bibitem[JT15]{MR3450534}
Palle Jorgensen and Feng Tian, \emph{Discrete reproducing kernel {H}ilbert
  spaces: sampling and distribution of {D}irac-masses}, J. Mach. Learn. Res.
  \textbf{16} (2015), 3079--3114. \MR{3450534}

\bibitem[JT16a]{MR3507188}
\bysame, \emph{Graph {L}aplacians and discrete reproducing kernel {H}ilbert
  spaces from restrictions}, Stoch. Anal. Appl. \textbf{34} (2016), no.~4,
  722--747. \MR{3507188}

\bibitem[JT16b]{MR3721329}
\bysame, \emph{Positive definite kernels and boundary spaces}, Adv. Oper.
  Theory \textbf{1} (2016), no.~1, 123--133. \MR{3721329}

\bibitem[JT17]{MR3642406}
\bysame, \emph{Non-commutative analysis}, World Scientific Publishing Co. Pte.
  Ltd., Hackensack, NJ, 2017, With a foreword by Wayne Polyzou. \MR{3642406}

\bibitem[JT18a]{MR3736758}
\bysame, \emph{Metric duality between positive definite kernels and boundary
  processes}, Int. J. Appl. Comput. Math. \textbf{4} (2018), no.~1, Art. 3, 13.
  \MR{3736758}

\bibitem[JT18b]{Jorgensen2018}
\bysame, \emph{Realizations and factorizations of positive definite kernels},
  Journal of Theoretical Probability (2018).

\bibitem[JT21]{MR4274591}
Palle Jorgensen and James Tian, \emph{Infinite-dimensional analysis}, World
  Scientific Publishing Co. Pte. Ltd., Hackensack, NJ, [2021] \copyright 2021,
  Operators in Hilbert space; stochastic calculus via representations, and
  duality theory. \MR{4274591}

\bibitem[JT22]{MR4295177}
\bysame, \emph{Reproducing kernels and choices of associated feature spaces, in
  the form of l2-spaces.}, J. Math. Anal. Appl. \textbf{505} (2022), no.~2,
  Paper No. 125535. \MR{4295177}

\bibitem[KOL20]{MR4131352}
Junhyeon Kwon, Hee-Seok Oh, and Yaeji Lim, \emph{Dynamic principal component
  analysis with missing values}, J. Appl. Stat. \textbf{47} (2020), no.~11,
  1957--1969. \MR{4131352}

\bibitem[KW71]{MR290013}
George Kimeldorf and Grace Wahba, \emph{Some results on {T}chebycheffian spline
  functions}, J. Math. Anal. Appl. \textbf{33} (1971), 82--95. \MR{290013}

\bibitem[Law05]{MR2249872}
Neil Lawrence, \emph{Probabilistic non-linear principal component analysis with
  {G}aussian process latent variable models}, J. Mach. Learn. Res. \textbf{6}
  (2005), 1783--1816. \MR{2249872}

\bibitem[LHN18]{MR3780557}
Yeejin Lee, Keigo Hirakawa, and Truong~Q. Nguyen, \emph{Camera-aware
  multi-resolution analysis for raw image sensor data compression}, IEEE Trans.
  Image Process. \textbf{27} (2018), no.~6, 2806--2817. \MR{3780557}

\bibitem[LLL11]{MR2933765}
Wen~Li Liu, Shu~Long L\"{u}, and Fei~Bao Liang, \emph{Kernel density
  discriminant method based on geodesic distance}, J. Fuzhou Univ. Nat. Sci.
  Ed. \textbf{39} (2011), no.~6, 807--810, 818. \MR{2933765}

\bibitem[LLY{\etalchar{+}}19]{MR3934645}
Xiao Lin, Ruosha Li, Fangrong Yan, Tao Lu, and Xuelin Huang, \emph{Quantile
  residual lifetime regression with functional principal component analysis of
  longitudinal data for dynamic prediction}, Stat. Methods Med. Res.
  \textbf{28} (2019), no.~4, 1216--1229. \MR{3934645}

\bibitem[MMP19]{MR3850675}
Shahar Mendelson, Emanuel Milman, and Grigoris Paouris, \emph{Generalized dual
  {S}udakov minoration via dimension-reduction---a program}, Studia Math.
  \textbf{244} (2019), no.~2, 159--202. \MR{3850675}

\bibitem[MMR{\etalchar{+}}01]{914517}
K.-R. Muller, S.~Mika, G.~Ratsch, K.~Tsuda, and B.~Scholkopf, \emph{An
  introduction to kernel-based learning algorithms}, IEEE Transactions on
  Neural Networks \textbf{12} (2001), no.~2, 181--201.

\bibitem[Mok07]{MR2482257}
Oleksandr Moklyachuk, \emph{Simulation of random processes with known
  correlation function with the help of {K}arhunen-{L}oeve decomposition},
  Theory Stoch. Process. \textbf{13} (2007), no.~4, 163--169. \MR{2482257}

\bibitem[Nai17]{naik2017advances}
G.R. Naik, \emph{Advances in principal component analysis: Research and
  development}, Springer Singapore, 2017.

\bibitem[PS03]{MR1968413}
Tomaso Poggio and Steve Smale, \emph{The mathematics of learning: dealing with
  data}, Notices Amer. Math. Soc. \textbf{50} (2003), no.~5, 537--544.
  \MR{1968413}

\bibitem[Raj18]{MR3820672}
S.~P. Raja, \emph{Secured medical image compression using {DES} encryption
  technique in {B}andelet multiscale transform}, Int. J. Wavelets Multiresolut.
  Inf. Process. \textbf{16} (2018), no.~4, 1850028, 33. \MR{3820672}

\bibitem[RS99]{MR1720704}
B.~Ya. Ryabko and M.~P. Sharova, \emph{Fast encoding of low-entropy sources},
  Problemy Peredachi Informatsii \textbf{35} (1999), no.~1, 49--60.
  \MR{1720704}

\bibitem[RT01]{rosipal2001kernel}
Roman Rosipal and Leonard~J Trejo, \emph{Kernel partial least squares
  regression in reproducing kernel {H}ilbert space}, Journal of machine
  learning research \textbf{2} (2001), no.~Dec, 97--123.

\bibitem[SBXD06]{MR2278445}
Jun Sun, Stephen Boyd, Lin Xiao, and Persi Diaconis, \emph{The fastest mixing
  {M}arkov process on a graph and a connection to a maximum variance unfolding
  problem}, SIAM Rev. \textbf{48} (2006), no.~4, 681--699. \MR{2278445}

\bibitem[SGS{\etalchar{+}}19]{MR3934524}
C.~Soize, R.~Ghanem, C.~Safta, X.~Huan, Z.~P. Vane, J.~Oefelein, G.~Lacaze,
  H.~N. Najm, Q.~Tang, and X.~Chen, \emph{Entropy-based closure for
  probabilistic learning on manifolds}, J. Comput. Phys. \textbf{388} (2019),
  518--533. \MR{3934524}

\bibitem[Sha19]{MR4010088}
Han~Lin Shang, \emph{Dynamic principal component regression: application to
  age-specific mortality forecasting}, Astin Bull. \textbf{49} (2019), no.~3,
  619--645. \MR{4010088}

\bibitem[Sha20]{MR4097183}
\bysame, \emph{Dynamic principal component regression for forecasting
  functional time series in a group structure}, Scand. Actuar. J. (2020),
  no.~4, 307--322. \MR{4097183}

\bibitem[Son08]{So07}
Myung-Sin Song, \emph{Entropy encoding in wavelet image compression},
  Representations, wavelets, and frames, Appl. Numer. Harmon. Anal.,
  Birkh\"{a}user Boston, Boston, MA, 2008, pp.~293--311. \MR{2459323}

\bibitem[SS16]{MR3560890}
Saburou Saitoh and Yoshihiro Sawano, \emph{Theory of reproducing kernels and
  applications}, Developments in Mathematics, vol.~44, Springer, Singapore,
  2016. \MR{3560890}

\bibitem[SSM98]{10.1162/089976698300017467}
Bernhard Scholkopf, Alexander Smola, and Klaus-Robert M{\"u}ller,
  \emph{{Nonlinear Component Analysis as a Kernel Eigenvalue Problem}}, Neural
  Computation \textbf{10} (1998), no.~5, 1299--1319.

\bibitem[SY06]{MR2228737}
Steve Smale and Yuan Yao, \emph{Online learning algorithms}, Found. Comput.
  Math. \textbf{6} (2006), no.~2, 145--170. \MR{2228737}

\bibitem[SZ07]{MR2327597}
Steve Smale and Ding-Xuan Zhou, \emph{Learning theory estimates via integral
  operators and their approximations}, Constr. Approx. \textbf{26} (2007),
  no.~2, 153--172. \MR{2327597}

\bibitem[SZ09a]{MR2558684}
\bysame, \emph{Geometry on probability spaces}, Constr. Approx. \textbf{30}
  (2009), no.~3, 311--323. \MR{2558684}

\bibitem[SZ09b]{MR2488871}
\bysame, \emph{Online learning with {M}arkov sampling}, Anal. Appl. (Singap.)
  \textbf{7} (2009), no.~1, 87--113. \MR{2488871}

\bibitem[TF19]{MR3935948}
Claudio Turchetti and Laura Falaschetti, \emph{A manifold learning approach to
  dimensionality reduction for modeling data}, Inform. Sci. \textbf{491}
  (2019), 16--29. \MR{3935948}

\bibitem[THH19]{MR3913046}
I-Ping Tu, Su-Yun Huang, and Dai-Ni Hsieh, \emph{The generalized degrees of
  freedom of multilinear principal component analysis}, J. Multivariate Anal.
  \textbf{173} (2019), 26--37. \MR{3913046}

\bibitem[UTK20]{9248120}
Nalika Ulapane, Karthick Thiyagarajan, and Sarath Kodagoda,
  \emph{Hyper-parameter initialization for squared exponential kernel-based
  gaussian process regression}, 2020 15th IEEE Conference on Industrial
  Electronics and Applications (ICIEA), 2020, pp.~1154--1159.

\bibitem[VD16]{MR3534893}
Hai~X. Vo and Louis~J. Durlofsky, \emph{Regularized kernel {PCA} for the
  efficient parameterization of complex geological models}, J. Comput. Phys.
  \textbf{322} (2016), 859--881. \MR{3534893}

\bibitem[VVQCR{\etalchar{+}}19]{MR3935831}
Rafael Vega~Vega, H\'{e}ctor Quinti\'{a}n, Jos\'{e}~Lu\'{\i}s Calvo-Rolle,
  \'{A}lvaro Herrero, and Emilio Corchado, \emph{Gaining deep knowledge of
  {A}ndroid malware families through dimensionality reduction techniques}, Log.
  J. IGPL \textbf{27} (2019), no.~2, 160--176. \MR{3935831}

\bibitem[WGLP19]{MR3911884}
Ning Wang, Shuangkui Ge, Baobin Li, and Lizhong Peng, \emph{Multiple
  description image compression based on multiwavelets}, Int. J. Wavelets
  Multiresolut. Inf. Process. \textbf{17} (2019), no.~1, 1850063, 22.
  \MR{3911884}

\bibitem[WGY10]{MR2741485}
Zaiwen Wen, Donald Goldfarb, and Wotao Yin, \emph{Alternating direction
  augmented {L}agrangian methods for semidefinite programming}, Math. Program.
  Comput. \textbf{2} (2010), no.~3-4, 203--230. \MR{2741485}

\bibitem[WMd97]{WU1997165}
W.~Wu, D.L. Massart, and S.~{de Jong}, \emph{The kernel {PCA} algorithms for
  wide data. {Part I}: Theory and algorithms}, Chemometrics and Intelligent
  Laboratory Systems \textbf{36} (1997), no.~2, 165--172.

\bibitem[WSS04]{10.1145/1015330.1015345}
Kilian~Q. Weinberger, Fei Sha, and Lawrence~K. Saul, \emph{Learning a kernel
  matrix for nonlinear dimensionality reduction}, Proceedings of the
  Twenty-First International Conference on Machine Learning (New York, NY,
  USA), ICML '04, Association for Computing Machinery, 2004, p.~106.

\bibitem[YLTL18]{MR3803845}
Yuping Ying, Yanping Lian, Shaoqiang Tang, and Wing~Kam Liu, \emph{Enriched
  reproducing kernel particle method for fractional advection-diffusion
  equation}, Acta Mech. Sin. \textbf{34} (2018), no.~3, 515--527. \MR{3803845}

\bibitem[YO21]{MR4296249}
Gene~Ryan Yoo and Houman Owhadi, \emph{Deep regularization and direct training
  of the inner layers of {N}eural {N}etworks with {K}ernel {F}lows}, Phys. D
  \textbf{426} (2021), Paper No. 132952. \MR{4296249}

\bibitem[YTLM{\etalchar{+}}18]{8248802}
Makoto Yamada, Jiliang Tang, Jose Lugo-Martinez, Ermin Hodzic, Raunak Shrestha,
  Avishek Saha, Hua Ouyang, Dawei Yin, Hiroshi Mamitsuka, Cenk Sahinalp,
  Predrag Radivojac, Filippo Menczer, and Yi~Chang, \emph{Ultra
  high-dimensional nonlinear feature selection for big biological data}, IEEE
  Transactions on Knowledge and Data Engineering \textbf{30} (2018), no.~7,
  1352--1365.

\bibitem[ZBB04]{MR2177937}
Laurent Zwald, Olivier Bousquet, and Gilles Blanchard, \emph{Statistical
  properties of kernel principal component analysis}, Learning theory, Lecture
  Notes in Comput. Sci., vol. 3120, Springer, Berlin, 2004, pp.~594--608.
  \MR{2177937}

\bibitem[ZSS19]{rs11101219}
Lan Zhang, Hongjun Su, and Jingwei Shen, \emph{Hyperspectral dimensionality
  reduction based on multiscale superpixelwise kernel principal component
  analysis}, Remote Sensing \textbf{11} (2019), no.~10.

\end{thebibliography}
 
\end{document}